\documentclass{amsart}
\usepackage{amsmath, amsthm, amssymb}
\usepackage{verbatim}
\usepackage{tikz}
\usetikzlibrary{matrix}

\usepackage[mathscr]{eucal} 
\usepackage{mathrsfs} 
\usepackage{cancel}
\usepackage{bbm}

\def\XXint#1#2#3{{\setbox0=\hbox{$#1{#2#3}{\int}$}
     \vcenter{\hbox{$#2#3$}}\kern-.5\wd0}}

\usepackage{enumitem, hyperref}
\makeatletter
\def\namedlabel#1#2{\begingroup
    #2%
    \def\@currentlabel{#2}%
    \phantomsection\label{#1}\endgroup}

\renewcommand{\d}{\partial}

\newcommand{\dbar}{\overline{\d}}
\newcommand{\ii}{\sqrt{-1}}
\newcommand{\wt}[1]{\widetilde{#1}}

\newcommand{\veps}{\varepsilon}

\newcommand{\al}{\alpha} 
 
\newcommand{\be}{\beta}

\newcommand{\de}{\delta}

\newcommand{\om}{\omega}

\newcommand{\tc}{{\tt c}}

\newcommand{\Ga}{\Gamma}
\newcommand{\Om}{\Omega}

\newcommand{\cH}{\mathcal{H}}

\newcommand{\bB}{\mathbb{B}}

\newcommand{\bP}{\mathbb{P}}
\newcommand{\bR}{\mathbb{R}}

\newcommand{\bC}{\mathbb{C}}

\newcommand{\cali}[1]{\mathscr{#1}}

\newcommand{\Kc}{\cali{K}}

\newtheorem{thm}{Theorem}
\newtheorem{prop}[thm]{Proposition}
\newtheorem{lem}[thm]{Lemma}
\newtheorem{cor}[thm]{Corollary}

\theoremstyle{definition}
\newtheorem{defn}[thm]{Definition}
\newtheorem{remark}[thm]{Remark}

\numberwithin{thm}{section}
\numberwithin{equation}{section}

\renewcommand{\[}{\begin{equation}}
\renewcommand{\]}{\end{equation}}

\newcommand{\rv}{\;\rvert\;}

\newcommand{\wed}{\wedge}

\usepackage{scalerel}[2014/03/10]
\usepackage[usestackEOL]{stackengine}
\def\intavg{\,\ThisStyle{\ensurestackMath{%
    \stackinset{c}{0\LMpt}{c}{0\LMpt}{\SavedStyle-}{\SavedStyle\phantom{\int}}}%
    \setbox0=\hbox{$\SavedStyle\int\,$}\kern-\wd0}\int}

\title[Alexander-Taylor's inequality in complex Sobolev spaces]{Alexander-Taylor's inequality for capacities in complex Sobolev spaces}

\author{Ngoc Cuong Nguyen} 
\address{Department of Mathematical Sciences, KAIST, 291 Daehak-ro, Yuseong-gu, Daejeon 34141, South Korea}
\email{cuongnn@kaist.ac.kr}
\date{\today}

\author{Do Duc Thai}
\address{Department of Mathematics, Hanoi National University of Education, 136 XuanThuy str.,  Hanoi, Vietnam}
\email{doducthai@hnue.edu.vn}

\begin{document}

\maketitle

\begin{abstract}  We prove a sharp inequality between the Alexander-Taylor capacity and  the functional capacity in a complex Sobolev space on a compact K\"ahler manifold. The latter space and capacity were introduced   by Dinh, Sibony and Vigny.
\end{abstract}

\section{Introduction}
 
 In 1980s Alexander and Taylor \cite{AT84} proved a sharp quantitative comparison between  the relative capacity of  Bedford and Taylor \cite{BT82}  and the projective capacity of Alexander \cite{A79}. It  is now called the Alexander-Taylor inequality in the literature, which has become very useful. For example,  it gave in \cite{AT84} a simplified proof of an effective version  of Josefson's theorem, due to El Mir \cite{EM79},  about the equivalence between locally pluripolar sets and globally pluripolar sets in $\bC^n$. Subsequently, the  inequality provided a crucial link in proving almost sharp uniform estimates for complex Monge-Amp\`ere equations on both domains in $\bC^n$ and compact K\"ahler manifolds  via the capacity method \cite{Ko98, Ko05}. Its generalization to the equations with semi-positive background forms on compact K\"ahler manifolds  played equally important roles in \cite{GZ05, GZ-book} and \cite{EGZ09}.

Let $(X,\al)$ be a compact K\"ahler manifold of dimension $n$. Let $\om\geq 0$ be a semi-positive closed $(1,1)$-form such that $\int_X \om^n>0$. For simplicity it is normalized by $\int_X \om^n=1$. Denote by $PSH(X,\om)$ the family of $\om$-plurisubharmonic ($\om$-psh) functions on $X$. 
The global Alexander-Taylor capacity $T_\om(\cdot)$ introduced and studied by Guedj and Zeriahi \cite{GZ05}  (see also \cite{DS06b} and \cite{GZ-book}) is as follows. For a compact subset $K\subset X$,
\[ \notag\label{eq:AT-cap-intro}
	T_\om (K) = \exp \left( - \sup_X V_K^* \right),
\]
where 
$\label{eq:SZ-intro}
	V_K(z) = \sup\{v(z): v\in PSH(X,\om), v\leq 0 \text{ on }K\}
$
and $V_K^*$ is its upper semi-continuous regularization. The latter function is often  called the Siciak-Zaharjuta extremal function associated to $K$. 
  
We are interested in the subspace $W^*(X) \subset W^{1,2}(X,\bR)$ introduced by Dinh and Sibony \cite{DS} (see Section~2), which is  called the  complex Sobolev space.  For $f\in W^*(X)$, let $\Ga_f$  denote the non-empty set of all positive closed $(1,1)$-currents $T$ satisfying
\[\notag\label{eq:w-ineq-intro}
	df \wed d^c f  \leq T \quad\text{weakly in } X.
\] 
Then, the $W^*$-norm of $f$ is given by
\[\notag\label{eq:w-norm-intro}
	\|f\|_*^2 = \|f\|_{L^2(X)}^2 + \inf\{\|T\| : T \in \Ga_f\},
\]
where  
$
	\|T\| = \int_X T \wed\al^{n-1}.
$ 
Soon afterward, Vigny \cite{Vigny} showed that $W^*(X)$ is a Banach space with the $W^*$-norm. One special feature of this new space is that it takes  complex structures of underlying manifolds into account. Therefore, it is a suitable framework to work with complex objects such as holomorphic/meromorphic maps and quasi plurisubharmonic functions. Since then, this space has found numerous applications in complex dynamics in higher dimensions and some related areas  \cite{BiD23},  \cite{DKW}, \cite{DNV25}, \cite{Vig15}, \cite{Vu20} and \cite{WZ}. 

Since $W^*(X)$ is a Banach space, it has  a natural  functional capacity $\tc(\cdot)$ defined by its norm, which was introduced in \cite{Vigny}. For a Borel subset $E\subset X$, 
\[\label{eq:intro-cap-defn}\notag
	{\tt c}(E) = \inf 
	\left\{ \| v \|_*^2 \;\rvert\;  v\in \Kc(E)
	\right\},
\] 
where
\[\label{eq:intro-ke-cap}\notag
 	\Kc(E) = \left\{ v \in W^*(X) \rv \{v \leq  -1\}^o \supset E \text{ and } v\leq 0 \right\}.
\]
The inclusion $ \{v \leq  -1\}^o \supset E$ means $v\leq -1$ a.e. in a neighborhood of $E$. In fact, this is a capacity in Choquet's sense \cite[Theorem~30]{Vigny}. It should be pointed out that the functional capacity of  with respect to the Sobolev norm is not strong enough to study properties of functions in $W^*(X)$. The reason is that there are bounded and unbounded $\om$-psh functions belong to $W^*(X)$ while the Sobolev capacity cannot be used to characterize neither quasicontinuity of such functions nor pluripolar sets.

The main result of the paper is the following comparison between the global Alexander-Taylor capacity and the functional capacity of $W^*(X)$ above.

\begin{thm} \label{thm:AT-c-intro} There exists a constant $A>0$ such that for every compact subset  $K\subset X$, 
$$	\exp\left( - A/ \tc(K) \right) \leq T_\om(K)   \leq  \exp\left( - 1 / (4n)^\frac{1}{n}   [\tc(K)]^\frac{1}{n}\right).
$$
\end{thm}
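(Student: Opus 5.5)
Write $M_K=\sup_X V_K^*$, so $T_\om(K)=e^{-M_K}$. The two inequalities amount to
\[
M_K\le A/\tc(K)
\qquad\text{and}\qquad
M_K\ge (4n)^{-1/n}\tc(K)^{1/n}.
\]
Both can be read off from the single function $V_K^*$. We may assume $K$ is non-pluripolar, since otherwise $M_K=+\infty$ and the first inequality is trivial. The second inequality then follows because $\tc(K)<\infty$, which holds since a constant $-1\in\Kc(K)$.

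\textbf{Upper bound on $T_\om(K)$, i.e.\ $\tc(K)\le 4n\,M_K^n$.} I would test $\tc(K)$ with a truncated and rescaled version of $V_K^*$. Set $M=M_K$ and
\[
u=\frac{\max(V_K^*,-\,\cdot)}{}\ \text{replaced by}\ v=\frac{V_K^*-M}{M},
\]
so $v\le 0$. On $K$ one has $V_K^*\le 0$ off a pluripolar set, hence $v\le -1$ there; to get $v\le -1$ on a neighbourhood I would use $V_{K_\epsilon}$ for compact neighbourhoods $K_\epsilon\downarrow K$, for which $M_{K_\epsilon}\uparrow M_K$. Since $V^*_{K}$ is a bounded $\om$-psh function, it lies in $W^*(X)$ with
\[
dv\wedge d^cv=M^{-2}\,dV\wedge d^cV\le M^{-2}\,\tfrac12\, dd^c(V-m)^2 .
\]
Here $m=\inf V$, and the identity $dd^c(V-m)^2=2(V-m)dd^cV+2dV\wedge d^cV$ is combined with $\om+dd^cV\ge0$ to control the negative part by $(V-m)\om$. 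The mass of the dominating current is then of order $M^{-2}\cdot(\text{osc }V)\cdot\|\om\|$. That is not yet the advertised power $n$, so the sharp constant needs a better test function.

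I would instead take $v=-\bigl(1-V/M\bigr)_+^{?}$, or rather use the Monge–Ampère energy. Since $(\om+dd^cV_K^*)^n$ is supported on $K$, integration by parts gives
\[
\int dV\wedge d^cV\wedge \om^{n-1}\lesssim M\int\om^n .
\]
The $L^2$ norm is bounded by $1$. Choosing the truncation level optimally should give $\tc\le 4n M^{n}$ after normalising with $\al$ comparable to $\om$. I expect the exact constant $(4n)^{1/n}$ to come from iterating $dd^c$ integration by parts $n$ times against $\om_V^{j}\wedge\om^{n-1-j}$.

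\textbf{Lower bound $\exp(-A/\tc(K))\le T_\om(K)$.} This is the main obstacle: from $M_K$ large I must produce a cheap test function. Given $w\in\Kc(K)$ with $\|w\|_*^2\le 2\tc(K)$, I would build a competitor for $V_K$. The plan is to use the Dinh–Sibony/Vigny estimate that functions in $W^*$ satisfy an exponential (Moser–Trudinger) integrability $\int e^{\lambda |w|/\|w\|_*^2}\le C$ in the global sense. This is proven by writing $w$ via $dd^c$ of quasi-psh functions controlled by $T\in\Ga_w$.

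Dually, consider the equilibrium function $\phi=V_K^*-M_K$, which is $\le 0$ and $\le -M_K$ on $K$. I would test $w$ against the measure $\mu_K=(\om+dd^cV_K^*)^n$ supported on $K$:
\[
\int(-w)\,d\mu_K\ge 1 .
\]
On the other hand, using $w\le0$ and Stokes,
\[
\int(-w)\,d\mu_K\le \int(-w)\om^n+C\Bigl(\int dw\wedge d^cw\wedge\cdots\Bigr)^{1/2}\Bigl(\int d\phi\wedge d^c\phi\wedge\cdots\Bigr)^{1/2}.
\]
The difficulty is controlling $\int d\phi\wedge d^c\phi$ by $M_K$ uniformly. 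A cleaner route is to use $\max(\phi/M_K,-1)$ and the classical comparison $T_\om(K)\ge \exp(-A/\mathrm{Cap}_\om(K))$ (Guedj–Zeriahi), with Monge–Ampère capacity $\mathrm{Cap}_\om$. This reduces everything to showing $\mathrm{Cap}_\om(K)\ge c\,\tc(K)$. For that I would take $u\in PSH(X,\om)$ with $-1\le u\le0$ and note $\int_K\om_u^n\ge\int(-w)\om_u^n$. Then I would integrate by parts $n$ times, each time using Cauchy–Schwarz with $dw\wedge d^cw\le T$ and $\|T\|\le\|w\|_*^2$. This gives $\int(-w)\om_u^n\le C\|w\|_*$, and hence the direction needed after adjusting exponents via the linear dependence on $\|w\|_*^2$.
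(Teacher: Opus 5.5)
Rough versions of both ingredients of the paper's proof appear in your proposal, but each is attached to the wrong inequality, so neither half goes through.

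First, you have misread the second inequality. Its right-hand side is $\exp\bigl(-1/(4n\,\tc(K))^{1/n}\bigr)$; this is how the paper's proof uses it. So the claim is $M_K\ge(4n\,\tc(K))^{-1/n}$, i.e.\ the \emph{lower} bound $\tc(K)\ge 1/(4nM_K^n)$: small $\tc$ forces a large extremal function. Your version $\tc(K)\le 4nM_K^n$ is an upper bound that is trivial once $M_K$ is not small (since $\tc(K)\le\tc(X)$), and it says nothing about small sets. Test functions only bound $\tc(K)$ from above, so your ``upper bound'' paragraph cannot prove the statement; you need an estimate valid for \emph{every} competitor.

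The paper proves $cap_\om(K)\le 4n\,\tc(K)$. Take $v\in\wt\Kc(K)$ with $-1\le v\le0$ and $dv\wed d^cv\le T$, and let $h=h_K^*$. Then $cap_\om(K)=\int_K\om_h^n\le\int_X v^2\om_h^n$. Each factor $\om_h$ is removed via $\int v^2\,dd^ch\wed S=-2\int v\,dv\wed d^ch\wed S\le 2\int T\wed S+\tfrac12\int dh\wed d^ch\wed S$, with $S=\om_h^{j}\wed\om^{n-1-j}$. The first step produces $\tfrac12\int(-h)\om_h^n=\tfrac12\,cap_\om(K)$, which is absorbed into the left side. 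The remaining $h$-terms telescope, leaving $\tfrac12\,cap_\om(K)\le\|v\|_{L^2}^2+2n\|T\|$.

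Your guess that $(4n)^{1/n}$ comes from $n$ integrations by parts is right. But they must be done with $h_K^*$ (not $V_K^*$) against an arbitrary competitor, and with $v^2$ rather than $-v$, so that the bound is linear in $\|v\|_*^2$. One then uses $cap_\om(K)\ge M_K^{-n}$ for $M_K\ge1$ (test $cap_\om$ with $(V_K^*-M_K)/M_K$). For $M_K<1$ the inequality as stated actually fails: for $K=X$ one has $T_\om(X)=1$. In general one only gets it with a factor $e$ in front, from $M_K\ge cap_\om(K)^{-1/n}-1$.

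Second, the first inequality is equivalent to $\tc(K)\le A/M_K$. It is not trivial for pluripolar $K$, contrary to what you say: there it asserts $\tc(K)=0$. Your reduction via Guedj--Zeriahi needs $cap_\om(K)\ge c\,\tc(K)$, which is false for $n\ge2$. For small coordinate balls, $cap_\om(B_r)\lesssim(\log 1/r)^{-n}$, whereas slicing competitors by complex lines through the centre gives $\tc(B_r)\gtrsim(\log 1/r)^{-1}$; the correct comparison is $\tc\le A\,cap_\om^{1/n}$. Moreover, the integration by parts you sketch proves the reverse, $cap_\om(K)\lesssim\tc(K)^{1/2}$. Note also the sign: $\int(-w)\om_u^n\ge\int_K\om_u^n$, not $\le$. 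That reverse bound is an ingredient of the second inequality, not the first.

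The right tool is the test function you discarded at the start:
\begin{itemize}
\item Take $v\in PSH(X,\om)$ with $v\le0$ on $K$, $v\ge0$ (replace $v$ by $\max(v,0)$), and $M=\sup_Xv\ge1$.
\item Set $u=(v-M)/M\in[-1,0]$ and $\phi=u-1/M$. Then $\phi<-1$ on the open set $\{v<1\}\supset K$.
\item $d\phi\wed d^c\phi\le\om/M+dd^c(u+u^2/2)$, a closed positive current of mass $M^{-1}\int_X\om\wed\al^{n-1}$.
\item $\|\phi\|_{L^2}^2\le2\int_X|\phi|\,\al^n\lesssim1/M$, by the uniform $L^1$ bound for sup-normalized $\om$-psh functions. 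Your bound $\|\cdot\|_{L^2}\le1$ is not enough here.
\end{itemize}
Hence $\tc(K)\lesssim1/M$. Letting $M\to M_K$ gives the first inequality (for $M_K<1$ use $\tc(K)\le\tc(X)$), and $M\to\infty$ gives $\tc(K)=0$ for compact pluripolar $K$. The power $M^{-1}$ from your first computation is exactly the right one; the power $n$ belongs to $cap_\om$, not to $\tc$.
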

Since both $\tc(\cdot)$ and $T_\om(\cdot)$ are Choquet capacities,  the inequalities hold for all Borel sets. Especially, 
they keep the spirit of the ones in  \cite[Lemmas~12.2, 12.3]{GZ-book} where $T_\om(K)$ is compared with the global Bedford-Taylor capacity $cap_\om(K)$. Namely, in the first inequality the constant $A$ is equivalent to the one of \cite[Lemma~12.3]{GZ-book}, up to a numerical constant. In the second one, it is significant that the constant on the right hand side is purely a dimensional constant which is similar to ones in  the local setting \cite{AT84} and global setting \cite[Lemma~12.1]{GZ-book}.  Regarding to the exponents in the inequalities, Remark~\ref{rmk:sharp-ineq} shows that they are sharp.

We expect that this inequality will be a bridge so that the tools from complex Sobolev spaces can be used to study open problems in pluripotential theory and  complex Monge-Amp\`ere equations. Some evidences have been appeared very recently in  \cite{DoN25}, \cite{DKN}, \cite{VV24} and \cite{Vu24, Vu26}.  We give here one more application to the equation with a background form $\om$ which is  {\em semi-positive} and {\em big}. This result  was implicitly contained in \cite[Theorem~3.6]{DKN}.

\begin{cor} \label{cor:ma-intro} Let  $\mu$ be a probability measure on $X$. Assume $\mu$ is $(W^*(X), \log^p)$-continuous with $p>n+1$, i.e., there is constants $c>0$ such that
$$
	\left| \int_X f d\mu  \right| \leq c \left[ 1+ |\log \|f\|_{L^1(X)}|\right]^{-p} 
$$ for every $f \in W^*(X) \cap C^0(X)$ and $\|f\|_*\leq 1$. Then, there exists a bounded $\om$-psh solution $u$ to  $(\om + dd^c u)^n = \mu.$
\end{cor}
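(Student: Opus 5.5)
The plan is to verify Kolodziej's capacity criterion for $\mu$: a bound of the form $\mu(K)\le C\,[1+|\log cap_\om(K)|]^{-p}$ for all compact $K$, with $p>n+1$. For a semi-positive big $\om$, the capacity method of \cite{Ko98, EGZ09} then produces a bounded $\om$-psh solution of $(\om+dd^cu)^n=\mu$. The left inequality of Theorem~\ref{thm:AT-c-intro} is the bridge that lets us first bound $\mu(K)$ in terms of $\tc(K)$ and then convert to $cap_\om(K)$.

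\textbf{Step 1: bound $\mu(K)$ by $\tc(K)$.} Fix a compact $K$ with $\tc(K)$ small (the case $\tc(K)$ bounded below is trivial, since $\mu(K)\le 1$). Pick $v\in\Kc(K)$ with $\|v\|_*^2\le 2\tc(K)$. Then $\|v\|_*\le 1$, and
$$\|v\|_{L^1(X)}\le C\|v\|_{L^2(X)}\le C\,\tc(K)^{1/2}.$$
The hypothesis is stated only for continuous test functions, so we regularize $v$. We use the smoothing in $W^*(X)$ of Dinh--Sibony/Vigny (local convolution glued by a partition of unity, or the heat/kernel regularization), which controls the $W^*$-norm by $C\|v\|_*$. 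Since $v\le -1$ a.e.\ on a neighborhood $U$ of $K$, the smoothed $v_\veps$ satisfies $v_\veps\le -1+\veps$ on $K$ and $v_\veps\le \veps$ on $X$, while its $L^1$-norm stays $\lesssim \tc(K)^{1/2}$. Put $f=(v_\veps-\veps)/C'$; it is continuous, $f\le 0$, $\|f\|_*\le 1$, and $f\le -c_0$ on $K$. Then
$$c_0\,\mu(K)\le \Bigl|\int_X f\,d\mu\Bigr|\le c\,\bigl[1+|\log \|f\|_{L^1}|\bigr]^{-p}\le C\,\bigl[1+|\log \tc(K)|\bigr]^{-p}.$$
I expect the main technical obstacle to be this regularization step: keeping $v_\veps\le -1+\veps$ uniformly near $K$ while preserving the $W^*$ bound, on a manifold rather than in a chart. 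If the direct gluing is awkward, I would instead replace $v$ by $\max(v_\veps,-1)$, which is continuous and has $W^*$-norm controlled by that of $v_\veps$.

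\textbf{Step 2: convert to $cap_\om$ and conclude.} By the left inequality of Theorem~\ref{thm:AT-c-intro}, $\tc(K)\le A/(-\log T_\om(K))$. The comparison \cite[Lemma~12.2]{GZ-book} between $T_\om$ and $cap_\om$ gives $-\log T_\om(K)\ge c_1\,cap_\om(K)^{-1/n}-C_2$. Combining, $\tc(K)\le C\,cap_\om(K)^{1/n}$ whenever $cap_\om(K)$ is small, so $|\log\tc(K)|\ge \tfrac1n|\log cap_\om(K)|-C$. Inserting this into Step 1 yields
$$\mu(K)\le C\,\bigl[1+|\log cap_\om(K)|\bigr]^{-p}.$$
Writing this as $\mu(K)\le cap_\om(K)\,h\bigl(cap_\om(K)^{-1/n}\bigr)$ with $h(x)=C x^n(1+n\log x)^{-p}$, Kolodziej's integrability condition $\int^\infty dx/(x\,h(x)^{1/n})<\infty$ holds. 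Together with the condition $p>n+1$ used in \cite[Theorem~3.6]{DKN} for the semi-positive big background, this gives the uniform estimate. The existence of a bounded solution then follows by the standard approximation argument: solve with smooth approximants of $\mu$, use the uniform $L^\infty$ bound, and pass to the limit via stability.
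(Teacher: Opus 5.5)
\textbf{There is a genuine gap, in the final step.} Your Steps~1 and~2 are essentially sound, modulo the regularization you flag; Step~2 is just the right-hand inequality of Theorem~\ref{thm:Vigny}. But what they produce, $\mu(K)\le C[1+|\log cap_\om(K)|]^{-p}$, does not give a bounded solution.

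Ko\l odziej's criterion reads $\mu(K)\le cap_\om(K)/h\bigl(cap_\om(K)^{-1/n}\bigr)$, with $h\ge 1$ increasing and $\int_1^\infty dy/(y\,h(y)^{1/n})<\infty$. Typical instances are $\mu(K)\le C\,cap_\om(K)[1+|\log cap_\om(K)|]^{-n-\veps}$, or $\mu\in\cH(\tau)$ as in \cite{EGZ09}. You multiplied by $h$ instead of dividing. That is why your integral converges for every $p>0$ and the hypothesis $p>n+1$ never enters.

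A bound decaying only logarithmically in the capacity does not even give $\mu(K)\le C\,cap_\om(K)$, and it is genuinely insufficient. For $n=1$, take a measure with density $|z|^{-2}(\log 1/|z|)^{-2}$ near a point. It satisfies $\mu(K)\lesssim cap_\om(K)$, hence your bound for every $p$, yet its potential is $-\infty$ at that point.

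The loss is built into Step~1. Your test function has $\|f\|_*\le 1$ and you only control $\|f\|_{L^1}\lesssim\tc(K)^{1/2}$, so the hypothesis can return nothing better than a power of $|\log\tc(K)|$. Rescaling $v$ to unit $W^*$-norm instead only gives $\mu(K)\lesssim\tc(K)^{1/2}\lesssim cap_\om(K)^{1/(2n)}$.

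\textbf{The missing idea} is the paper's route, following \cite{DKN}: test against functions whose $L^1$-norm is \emph{exponentially} small compared with their $W^*$-norm. Take a bounded $v\in PSH(X,\om)$ with $\sup_X v=0$ and set $v_s=\max\{v,-s+1\}$. Then $\|v_s-v\|_*\le (A+1)\|v\|_{L^\infty}$. By the uniform exponential integrability of normalized quasi-psh functions, $\|v_s-v\|_{L^1}\le c_2e^{-\tau_2 s}$. Apply the hypothesis to $(v_s-v)/((A+1)\|v\|_{L^\infty})$, after approximation by continuous functions. This gives $\mu(v\le -s)\le\int_X(v_s-v)\,d\mu\lesssim \|v\|_{L^\infty}(\tau_2 s+\log\|v\|_{L^\infty}+1)^{-p}$. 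As in \cite[Theorem~3.6]{DKN}, this sublevel estimate yields $\mu\in\cH(\tau_1)$, and \cite[Theorem~2.1]{EGZ09}, valid for semi-positive big $\om$, concludes.

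To see where $p>n+1$ enters, take $v=V_K^*-M$ with $M=\sup_X V_K^*$, and $s=M$. Then $v\le -M$ q.e.\ on $K$, so $\mu(K)\lesssim M^{1-p}$. The Alexander--Taylor inequality \eqref{eq:global-AT}, i.e.\ $M\ge cap_\om(K)^{-1/n}$, turns this into $\mu(K)\lesssim cap_\om(K)^{(p-1)/n}$. So $\tau_1=(p-1)/n-1$, which is positive exactly when $p>n+1$.

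Your Step~1 is not wasted. It shows that $\mu$ charges no pluripolar set, which is needed here because $v\le -M$ holds only quasi-everywhere on $K$.
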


\bigskip

{\em Organization.} In Section~\ref{sec:cap} we recall basic facts of pluripotential theory and complex Sobolev spaces. In particular, we give another proof of existence of quasi-continuous representative for bounded functions in $W^*(X)$. Section~\ref{sec:comparisons} devotes to prove the main theorem and corollary.

\bigskip

{\em Acknowledgement.}  A part of the paper was completed while the first author was visiting University of Engineering and Technology (UET) in Hanoi in December 2025. He wish to thank Pham Hoang Hiep for a warm invitation.  The paper was written while the first author  visited the Center for Complex Geometry (Daejeon). He would like to thank Jun-Muk Hwang and Yongnam Lee for their kind support and exceptional hospitality. He is also grateful to the institution for providing perfect working conditions.  The research of the second author is supported by the ministry-level project B2025-SPH-02.

\section{A functional capacity in complex Sobolev spaces} 
\label{sec:cap}
In this section we recall some  basic facts about the complex Sobolev space and pluripotential theory.  Let $(X,\al)$ be a compact K\"ahler manifold of dimension $n$. Let $\om\geq 0$ be a semi-positive closed $(1,1)$-form whose volume  is normalized by 
\[\label{eq:norm-om} \int_X \om^n =1.\]
Clearly, we may assume that 
\[\label{eq:al-om}
	\om \leq \al. 
\]
Let  $W^{1,2}(X,\bR)$ denote the Sobolev space of functions in $L^2(X,\bR)$ whose first  order (weak) derivatives are $L^2$-integrable. For $f\in W^{1,2}(X,\bR),$ let $\Ga_f$  denote the set of all positive closed $(1,1)$-currents $T$ satisfying
\[\label{eq:w-ineq}
	df \wed d^c f  \leq T \quad\text{weakly in } X.
\]
Here we use the normalization 
$$d^c = \frac{\ii}{2\pi} (\dbar -\d), \quad dd^c = \frac{\ii}{\pi}\d\dbar.
$$
\begin{defn}[\cite{DS}]
A function $f\in W^{1,2}(X,\bR)$ belongs to the complex Sobolev space $W^*(X)$ if the set $\Ga_f$ associated to $f$ is non-empty.
\end{defn}
The total mass of a closed positive (1,1)-current $T$ on $X$ is given by
$$
	\|T\| = \int_X T \wed\al^{n-1}.
$$
By the compactness of $X$ and closedness of $T$, $\|T\|$ is always finite. Basic functional properties of $W^*(X)$ have been studied by Vigny \cite{Vigny}. In particular, 
he showed  that this is a Banach space with the $W^*$-norm $\|\cdot\|_*$ defined by
\[\label{eq:w-norm}
	\|f\|_*^2 = \|f\|_{L^2(X)}^2 + \min\{\|T\| : T \in \Ga_f\},
\]
where the minimum is attained thanks to compactness of closed positive $(1,1)$-currents. Though there may be more than one minimizer.

The basic examples of functions in the space are Lipschitz functions. But there are many more. 
It is easy to check that this space contains all bounded quasi-plurisubharmonic (quasi-psh) functions. Hence, for $u, v \in PSH(X,\om) \cap L^\infty(X)$,
$$
	u \pm v \in W^*(X).
$$
We refer the readers to \cite{Vigny} for more examples of functions in $W^*(X)$, which contains unbounded quasi-psh functions.

Although the continuous functions are not dense in $W^*(X)$ with respect to the strong topology of $W^*$-norm \cite[Proposition~7]{Vigny}, however, 
the following compactness \cite[Proposition~4]{Vigny} of $W^*$-norm will be enough for many applications.
\begin{lem} \label{lem:w-compactness}  Let $\{f_j \}_{j\geq 1} \subset W^*(X)$ be such that $\|f_j \|_* \leq A$ for every $j\geq 1$. There exists a subsequence $\{f_{j_k} \}_{j_k \geq 1}$ converging weakly in $W^{1,2}(X,\bR)$ to $f \in W^*(X)$  and $\|f\|_* \leq \liminf_{j\to \infty} \|f_j\|_*$.
\end{lem}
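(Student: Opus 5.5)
The plan is to extract convergent subsequences separately for the functions and for the currents, and then pass to the limit in the inequality $df_j\wed d^c f_j\le T_j$ using convexity.

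First pass to a subsequence along which $\|f_j\|_*$ converges to $\liminf_{j\to\infty}\|f_j\|_*$. For each $j$ choose a minimizer $T_j\in\Ga_{f_j}$ in \eqref{eq:w-norm}, so that
$$\|f_j\|_{L^2(X)}^2+\|T_j\|=\|f_j\|_*^2\le A^2 .$$

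Next, show that $(f_j)$ is bounded in $W^{1,2}(X,\bR)$. Since $df_j\wed d^c f_j\le T_j$ and $\al^{n-1}$ is a smooth positive form,
$$\int_X df_j\wed d^c f_j\wed \al^{n-1}\le \|T_j\|\le A^2 .$$
The left-hand side equals $c_n\int_X|\nabla f_j|^2_\al\,\al^n$ for a dimensional constant $c_n>0$. Hence $(f_j)$ is bounded in the Hilbert space $W^{1,2}(X,\bR)$. We may therefore pass to a further subsequence with $f_j\rightharpoonup f$ weakly in $W^{1,2}$. By Rellich's theorem, $f_j\to f$ strongly in $L^2(X)$, so $\|f_j\|_{L^2}\to\|f\|_{L^2}$.

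The masses $\|T_j\|$ are bounded, so by the weak compactness of positive closed currents of bounded mass we may also assume $T_j\to T$ weakly. The limit $T$ is a positive closed $(1,1)$-current, and $\|T\|=\lim_j\|T_j\|$ because $\al^{n-1}$ is a smooth test form.

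The main step is to show $T\in\Ga_f$, i.e. $df\wed d^cf\le T$. It suffices to test against every smooth positive $(n-1,n-1)$-form $\Phi$. Consider the quadratic form
$$Q_\Phi(g)=\int_X dg\wed d^cg\wed\Phi ,\qquad g\in W^{1,2}(X,\bR).$$
It is nonnegative because $dg\wed d^cg$ is a positive $(1,1)$-form and $\Phi$ is positive. It is continuous in the strong $W^{1,2}$ topology since $\Phi$ is smooth. A nonnegative quadratic form is convex, and a convex, strongly continuous functional is weakly lower semicontinuous. Therefore
$$Q_\Phi(f)\le\liminf_{j} Q_\Phi(f_j)\le\liminf_j\int_X T_j\wed\Phi=\int_X T\wed\Phi .$$
Since $\Phi$ is arbitrary, $df\wed d^cf\le T$. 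Thus $\Ga_f\ne\emptyset$ and $f\in W^*(X)$.

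Finally,
$$\|f\|_*^2\le\|f\|_{L^2}^2+\|T\|=\lim_j\bigl(\|f_j\|_{L^2}^2+\|T_j\|\bigr)=\Bigl(\liminf_{j\to\infty}\|f_j\|_*\Bigr)^2,$$
where the last equality uses the choice of subsequence made at the start.

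I expect the only delicate point to be the weak lower semicontinuity of $Q_\Phi$, which is what transfers the pointwise current inequality to the limit. A minor point to check is that testing against smooth positive $(n-1,n-1)$-forms suffices for inequalities between $(1,1)$-currents. This holds because, in bidegree $(1,1)$, weak and strong positivity of the dual cone coincide.
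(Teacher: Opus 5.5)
Your proof is correct and complete. The paper gives no proof of this lemma and simply quotes Vigny's Proposition~4. Your argument is the standard proof of that cited result: take weak limits of $f_j$ in $W^{1,2}$ and of the minimizing currents $T_j$, then get $df\wedge d^cf\le T$ from the weak lower semicontinuity of the nonnegative quadratic form $g\mapsto\int_X dg\wedge d^cg\wedge\Phi$.
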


Next, we recall a useful fact whose  proof  is derived easily from \cite[Theorem~2.35]{KLV-book}. 
\begin{prop}  \label{prop:weak-conv-cp} Let $1<p<\infty$ and $E \subset \Om$ be a Borel set.  Let $\{f_j\} \subset L^p(\Om, \bR)$ and $\{g_j\} \subset L^p(\Om,\bR)$ be such that  $f_j \to f \in L^p(\Om,\bR)$ weakly and $g_j \to g\in L^p(\Om,\bR)$ weakly. 
If $|f_j(x)| \leq g_j(x)$ for a.e. $x\in E$, then $|f(x)| \leq g(x)$ a.e. $x\in E$. 
\end{prop}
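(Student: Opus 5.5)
This is a local statement on a domain $\Om$, so the plan is to reduce it to the standard fact that weak limits preserve pointwise inequalities holding a.e. on $E$. The approach is to encode the inequality $|f_j|\le g_j$ on $E$ as membership of the pair $(f_j,g_j)$ in a closed convex subset of $L^p(\Om)\times L^p(\Om)$. We then use Mazur's lemma: weakly closed and strongly closed coincide for convex sets.

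\emph{Step 1 (the set).} Define
$$C=\{(\phi,\psi)\in L^p(\Om,\bR)^2 : |\phi(x)|\le \psi(x) \text{ for a.e. } x\in E\}.$$
The condition $|\phi|\le\psi$ is equivalent to the two linear inequalities $\phi\le\psi$ and $-\phi\le\psi$. Hence $C$ is an intersection of two convex cones and is itself convex.

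\emph{Step 2 ($C$ is strongly closed).} Let $(\phi_k,\psi_k)\to(\phi,\psi)$ in $L^p\times L^p$. Passing to a subsequence, the convergence holds a.e. The inequalities $\pm\phi_k\le\psi_k$ hold a.e. on $E$, and they survive the pointwise limit outside a null set. So $(\phi,\psi)\in C$.

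\emph{Step 3 (Mazur's lemma).} Since $1<p<\infty$, the product $L^p\times L^p$ is a reflexive Banach space. Its dual is $L^{p'}\times L^{p'}$, so $(f_j,g_j)\to(f,g)$ weakly in the product. By Mazur's lemma, some convex combinations of the tails, $\sum_{k\ge j}\lambda_k^{(j)}(f_k,g_k)$, converge strongly to $(f,g)$. These combinations lie in $C$ by convexity, so $(f,g)\in C$ by Step 2. Equivalently, one can invoke Hahn–Banach directly: a closed convex set is weakly closed. This gives $|f|\le g$ a.e. on $E$.

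No step is a real obstacle. The only points needing care are that weak convergence of each component gives weak convergence of the pair, which is immediate from the form of the product dual, and that the null sets in Step 2 are countable in number, so their union is still null.
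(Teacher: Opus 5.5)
Your proof is correct, but it takes a different route from the one the paper indicates.

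The paper does not write out an argument. It cites \cite[Theorem~2.35]{KLV-book}, and later, in the existence part for the extremal function $\Phi_E$, it remarks that the proof uses only Lebesgue points of $L^2$ functions. That points to a linear, pointwise route. Since $\chi_{E\cap B}\in L^{p'}$ for every ball $B$, weak convergence gives $\int_{E\cap B}(g\mp f)=\lim_j\int_{E\cap B}(g_j\mp f_j)\ge 0$. The Lebesgue differentiation theorem, applied to $(g\mp f)\chi_E$ at its Lebesgue points lying in $E$, then yields $g\mp f\ge 0$ a.e.\ on $E$. One can even bypass Lebesgue points by testing against $F=E\cap B\cap\{g\mp f<0\}$, which forces $F$ to be null.

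You instead encode the constraint as a strongly closed convex subset of $L^p\times L^p$. You check strong closedness through an a.e.\ convergent subsequence, and then use Mazur's theorem that such sets are weakly closed. The testing argument is more elementary: it needs nothing beyond weak convergence against a single indicator function. Yours is more robust: it passes to weak limits any constraint that defines a strongly closed convex set, not only linear inequalities.

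A minor point: Mazur's lemma holds in every normed space, so reflexivity plays no role in your Step~3. In fact neither argument needs the restriction $1<p<\infty$.
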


For a Borel subset $E \subset X$, 
$$
	\Kc(E) = \{v \in W^*(X): \{v \leq -1\}^o \supset  E, v \leq 0 \text{ on }X \}.
$$
Clearly, if $E = G$ is an open subset, then 
$$
	\Kc(G) = \{v \in W^*(X): v \leq -1 \quad \text{a.e. on } G, \; v \leq 0 \text{ on }X \}.
$$
Following Vigny \cite{Vigny}, we define the (global) functional capacity 
\[\label{eq:w-cap}
	\tc(E) = \inf \{ \|v\|_*^2: \; v \in \Kc(E)\}.
\]
Observe that  if $v\in \Kc(E)$, then $\max\{v,-1\} \in \Kc(E)$. An elementary computation shows that $\| \max\{v,-1\} \|_* \leq \|v\|_*$ (see e.g., \cite{Vigny} or \cite{DMV}). Hence, 
in the definition of  $\tc(\cdot)$ it is enough to take $v$ in a smaller family $\Kc'(E)$ defined by
$$
	\Kc'(E) = \left\{ v\in \Kc(E): -1 \leq v \leq 0\right\}.
$$
In other words,
\[\label{eq:w-cap-use}
	\tc(E) = \inf \{ \|v\|_*^2: \; v \in \Kc'(E)\}, 
\]
The following basic properties of the capacity were obtained in \cite[Proposition~27]{Vigny}, whose proofs  are followed from classical arguments (see e.g. \cite{AH96}).

\begin{prop} \label{prop:w-cap-basic-p} The set function on Borel sets $E \mapsto \tc(E)$ satisfies:
\begin{itemize}
\item
[(a)] for Borel sets $E \subset F \subset X$, $\tc(E) \leq \tc (F)$.
\item
[(b)] For a sequence of Borel sets $\{E_i\}_{i\geq 1}$ in $X$, 
$
	\tc (\cup_{i\geq 1} E_i) \leq \sum_{i\geq 1} \tc (E_i).
$
\item[(c)] For $E \subset X$, $\tc(E) \leq \int_X \om^n =1.$
\item
[(d)] For decreasing sequence of compact sets $K_1 \supset K_2 \supset \cdots$ and $K= \cap_{i\geq 1} K_i$, then 
$\tc(K)  = \lim_{i\to \infty} \tc(K_i).$
\end{itemize}
\end{prop}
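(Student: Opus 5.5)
Monotonicity (a) follows directly from the definition: if $E\subset F$, then $\Kc(F)\subset \Kc(E)$, because a function with $v\le -1$ a.e. on a neighborhood of $F$ also satisfies this on a neighborhood of $E$. Taking the infimum of $\|v\|_*^2$ over the larger family $\Kc(E)$ gives $\tc(E)\le \tc(F)$.

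For (c), take $v\equiv -1$. It belongs to $\Kc(E)$ for every $E$. Since $dv=0$, we have $0\in\Ga_v$, so $\|v\|_*^2=\|1\|_{L^2(X)}^2$. The $L^2$ norm is taken with respect to the normalized volume, presumably $\om^n$ or $\al^n$ normalized so that the total volume is $\int_X\om^n=1$. Hence $\tc(E)\le 1$.

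For (b), I may assume $\sum_i\tc(E_i)<\infty$. Fix $\veps>0$ and choose $v_i\in\Kc'(E_i)$ with $\|v_i\|_*^2\le \tc(E_i)+\veps 2^{-i}$. Set $v=\inf_i v_i=-\sup_i(-v_i)$ and consider the partial minima $w_N=\min_{i\le N}v_i$. I use the pointwise identity $|\min(a,b)|^2\le a^2+b^2$, and similarly for the gradient part: on $\{v_1<v_2\}$ we have $dw\wed d^cw=dv_1\wed d^cv_1$, and elsewhere it equals the corresponding form for $v_2$. It follows that $dw_N\wed d^cw_N\le \sum_{i\le N}T_i$ with $T_i\in\Ga_{v_i}$ of minimal mass. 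This is the standard lattice property of $W^*$, the same computation that gives $\|\max\{v,-1\}\|_*\le\|v\|_*$. Therefore $\|w_N\|_*^2\le\sum_i\|v_i\|_*^2\le\sum_i\tc(E_i)+\veps$. The $w_N$ decrease to $v$ and are uniformly bounded in $W^*$, so Lemma~\ref{lem:w-compactness} gives $v\in W^*(X)$ with $\|v\|_*^2\le\liminf\|w_N\|_*^2$. Since $-1\le v\le0$ and $v\le v_i\le -1$ a.e. on a neighborhood of each $E_i$, the union of these neighborhoods is a neighborhood of $\cup E_i$ on which $v\le -1$ a.e. So $v\in\Kc(\cup E_i)$, and letting $\veps\to0$ proves (b).

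For (d), monotonicity gives $\tc(K)\le\lim_i\tc(K_i)$, and the limit exists because the sequence is decreasing. For the reverse inequality, fix $v\in\Kc(K)$ with $\|v\|_*^2\le\tc(K)+\veps$, and let $U$ be an open neighborhood of $K$ on which $v\le-1$ a.e. By compactness, $K_i\subset U$ for $i$ large, since the $K_i$ are compact and decrease to $K$. Then $U$ is also a neighborhood of $K_i$, so $v\in\Kc(K_i)$ and $\tc(K_i)\le\tc(K)+\veps$.

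The only step needing real care is the lattice/min estimate in (b), namely that $\Ga$ of a minimum is controlled by the sum of the currents. I would justify it by the a.e. gradient formula for $\min$ of Sobolev functions together with the positivity of each $T_i$. Proposition~\ref{prop:weak-conv-cp} can be used to pass the inequality $dw_N\wed d^cw_N\le\sum T_i$ to the weak limit, although the lower semicontinuity of the norm already suffices.
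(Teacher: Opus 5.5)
Your proof is correct and follows the classical Adams--Hedberg argument that the paper defers to (via Vigny's Proposition~27): the lattice estimate $\|\min_{i\le N}v_i\|_*^2\le\sum_{i\le N}\|v_i\|_*^2$ together with Lemma~\ref{lem:w-compactness} for (b), and, for (d), the fact that $K_i$ eventually lies in the open set where $v\le-1$ a.e. One small point on (c): in this paper the $L^2$-norm is taken against $\al^n$ (see the proofs of Lemmas~\ref{lem:extr-funciton-o} and~\ref{lem:star-norm-SZ}), and $\om\le\al$, so $v\equiv-1$ gives $\tc(E)\le\tc(X)=\int_X\al^n\geq\int_X\om^n=1$; the constant $1$ in (c) is therefore a normalization slip, which is harmless because later uses need only a uniform bound.
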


In order for $\tc(\cdot)$ to be a Choquet capacity (Theorem~\ref{thm:Vigny}), then we need the limit properties under decreasing of compact sets and under increasing of Borel sets. The former one is already given in (d).  However, the latter limit property is the most difficult one to prove for $\tc(\cdot)$ (see \cite[Theorem~30]{Vigny}).

The outer regularity is rather easy so we include its proof.
\begin{lem}[outer regularity] \label{lem:outer-reg} For a Borel subset $E\subset \Om$,
\[\notag	\tc(E) = \inf\{ \tc(G) : E \subset G,\; G \text{ is open}\}.
\]
\end{lem}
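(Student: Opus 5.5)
The inequality $\tc(E) \leq \inf\{\tc(G)\}$ follows at once from monotonicity, Proposition~\ref{prop:w-cap-basic-p}(a), since $E\subset G$. The work is in the reverse inequality, and the point is that the definition of $\Kc(E)$ already contains an open set. Fix $\veps>0$. If $\tc(E)$ were $+\infty$ there would be nothing to prove, and in any case Proposition~\ref{prop:w-cap-basic-p}(c) gives $\tc(E)\le 1$. So by \eqref{eq:w-cap} we may choose $v\in \Kc(E)$ with
$$\|v\|_*^2 \leq \tc(E)+\veps .$$

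The membership $v\in\Kc(E)$ means $\{v\leq -1\}^o\supset E$, that is, $v\leq -1$ almost everywhere on some open neighborhood $G$ of $E$. Concretely, I will take $G$ to be the union of all open sets on which $v\le -1$ a.e. This is again open. Moreover $v\leq -1$ a.e. on $G$: $X$ is second countable, so $G$ is a countable union of such open sets, and a countable union of null sets is null.

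With this choice, $v\leq -1$ a.e. on $G$ and $v\leq 0$ on $X$. By the description of $\Kc(G)$ for open $G$ given just before \eqref{eq:w-cap}, this says exactly that $v\in \Kc(G)$. Hence
$$\tc(G)\leq \|v\|_*^2\leq \tc(E)+\veps .$$
Taking the infimum over open $G\supset E$ and then letting $\veps\to 0$ gives the reverse inequality, which proves the lemma.

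The only point needing any care is the definition of the interior of $\{v\le -1\}$ when $v$ is merely defined almost everywhere. One has to fix the convention that it means the largest open set on which $v\le -1$ a.e., and check, via second countability as above, that on this open set the inequality $v\le-1$ does hold almost everywhere. Once that convention is fixed, the argument is essentially tautological, and no compactness result such as Lemma~\ref{lem:w-compactness} is needed.
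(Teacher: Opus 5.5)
Your proposal is correct and follows the paper's proof essentially verbatim. Monotonicity gives one inequality; for the other, you take a near-minimizer $v\in\Kc(E)$ and note that the open neighborhood of $E$ on which $v\le -1$ a.e.\ is an open $G\supset E$ with $v\in\Kc(G)$, so $\tc(G)\le \tc(E)+\veps$. Your extra step of passing to the largest such open set via second countability is harmless but unnecessary, since any neighborhood furnished by the definition of $\Kc(E)$ already works.
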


\begin{proof} By the monotonicity in Proposition~\ref{prop:w-cap-basic-p}-(a), 
$$
	\tc(E) \leq \inf\{ \tc(G) : E \subset G,\; G \text{ is open}\}.
$$
To prove the reverse inequality we may assume $\Kc(E)$ is non empty. Let $\veps>0$. There exist $v\in \Kc(E)$ with 
$
	\|v\|_*^2 \leq \tc(E) +\veps
$
and  an open set $G_\veps$ such that $v(x)\leq - 1$ a.e. $x\in G_\veps$. Therefore, 
$$
	\inf\{ \tc(G) : E \subset G,\; G \text{ is open}\} \leq \tc(G_\veps) \leq \|v\|_*^2 \leq \tc(E) +\veps.
$$
The lemma follows by letting $\veps\to 0$.
\end{proof}

\subsection{Local and global definitions of capacities} 

The global Bedford-Taylor capacity was introduced in \cite{Ko03} and studied in detail in \cite{GZ05, GZ-book}.  For a Borel subset $E\subset X$, one defines 
\[\label{eq:cap-om}
	cap_\om (E) = \sup \left\{\int_E (\om + dd^c v)^n : v\in PSH(X, \om), -1\leq v \leq 0 \right\}.
\]
Then,  we have a formula \cite[Theorem~9.15]{GZ-book} realizing the capacity via extremal functions. Precisely, for a compact subset $K\subset X$,
\[\label{eq:cap-id}
	cap_\om (K) = \int_K (\om+ dd^c h_K^*)^n = \int_X - h_K^* (\om+dd^c h_K^*),
\]
where $h_K^*$ denotes the upper semi-contitinuous regularization of 
\[\label{eq:ext-h-formula}
	h_K(z) = \sup\{ v(z): v \in PSH(X,\om), v_{|_K} \leq -1, v \leq 0\}.
\]
As in local case \cite{BT82}, this global capacity is a Choquet capacity and it characterizes locally/globally pluripolar sets \cite[Theorem~9.17]{GZ-book}. Namely, a Borel set $E\subset X$ is pluripolar if and only if
\[\label{eq:pluripolar-cap}	
	cap_\om(E) =0 \quad \Leftrightarrow \quad cap_\al (E) =0,
\]
where $cap_\al(E)$ is considered with respect to  $PSH(X,\al)$ and $\al$   in  \eqref{eq:cap-om}.

We consider  yet another capacity defined via local covering and the local relative capacity of Bedford and Taylor \cite{BT82}  as follows. Let $\Om \subset\subset \bC^n$ be an open subset. Then,  for a Borel set $E\subset \Om$, 
\[
	cap(E, \Om) = \sup\left\{ \int_\Om (dd^c v)^n : v\in PSH(\Om), -1 \leq v \leq 0\right\}.
\]
Consider two finite coverings $\{D_\ell\}_{\ell}$ and  $\{\Om_\ell\}_{\ell}$ such that $D_\ell \subset \subset \Om_\ell$ for each $1\leq \ell \leq N$ and each holomorphic chart $\Om_\ell$ is a smooth strictly pseudoconvex domain. The  quantity $cap_{\rm BT}(E)$ is given by 
$$
	cap_{\rm BT}(E) = \sum_{\ell=1}^N cap(E \cap D_\ell, \Om_\ell).
$$
By the observation of Ko\l odziej \cite[Page 670]{Ko03}  (see also \cite[Proposition~9.8]{GZ-book}), these two capacities $cap_\al(\cdot)$ and $cap_{\rm BT}(\cdot)$ are equivalent to each other. More precisely,  there exists a constant $A_0$ depending only on the covering and $\al$ such that
\[\label{eq:cap-loc-vs-glob}
	\frac{1}{A_0}  cap_{\rm BT}(E) \leq cap_\al(E)  \leq A_0 cap_{\rm BT}(E).
\]
 When the background metric is K\"ahler,
we have a comparison between their global counterparts from \cite[Theorem 28]{Vigny} and \cite[Proposition~5.1]{DKN} ({\em c.f.} Remark~\ref{rmk:DKN})

\begin{lem}\label{lem:cap-c} There exists a constant $A>0$ such that for every Borel set $E\subset X$,
$$
	\frac{1}{A} cap_\al(E) \leq  \tc(E) \leq A [cap_\al(E)]^\frac{1}{n}.
$$
Consequently, if $\tc (E) =0$, then $E$ is pluripolar.
\end{lem}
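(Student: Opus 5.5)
We need to prove Lemma cap-c: $\frac1A cap_\al(E) \le \tc(E) \le A[cap_\al(E)]^{1/n}$.

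By outer regularity (Lemma~\ref{lem:outer-reg}) and the corresponding outer regularity of $cap_\al$, I would first reduce to $E=G$ open, or even to compact $K$ approximated from outside by open sets. The two inequalities are then proved separately.

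\emph{Lower bound $cap_\al(E)\le A\,\tc(E)$.} Fix $v\in\Kc'(E)$ with $-1\le v\le0$, $v\le -1$ a.e. near $E$, and $T\in\Ga_v$ with $\|T\|\le \|v\|_*^2$. For any $u\in PSH(X,\al)$ with $-1\le u\le 0$, we have $\int_E(\al+dd^cu)^n\le \int_X v^2(\al+dd^cu)^n$. Here we should first regularize $v$, or work with a quasicontinuous representative, so that $v^2\ge1$ on a neighbourhood of $E$ makes sense against the Monge--Amp\`ere measure. Then I would integrate by parts repeatedly, writing $(\al+dd^cu)^n=\al\wedge(\cdot)^{n-1}+dd^cu\wedge(\cdot)^{n-1}$. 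The terms become $\int v^2\al\wedge S$ and $\int -u\, dd^c(v^2)\wedge S$, with $dd^c v^2 = 2dv\wedge d^cv+2v\,dd^cv$. The term with $dv\wedge d^c v$ is controlled by $T\wedge S$, and its mass is cohomological, bounded by $C\|T\|$. The term $v\,dd^cv$ is handled by a further integration by parts and Cauchy--Schwarz: $|\int u\,dv\wedge d^c(\cdot)\wedge S|\le(\int dv\wedge d^cv\wedge S)^{1/2}(\cdot)^{1/2}$. Inducting on the number of $dd^cu$ factors, everything reduces to $\|v\|_{L^2}^2$ (for the pure $\al^n$ term) plus $C\|T\|$. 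This gives $cap_\al(E)\le A\|v\|_*^2$, using the bound on $u$.

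\emph{Upper bound $\tc(E)\le A\,cap_\al(E)^{1/n}$.} For compact $K$, I would use the extremal function $h_K^*$ from \eqref{eq:ext-h-formula} relative to $\al$. It satisfies $-1\le h_K^*\le0$ and $h_K^*=-1$ on $K$ off a pluripolar set. To get a genuine element of $\Kc$, we need $\le -1$ on a neighbourhood of $K$. So I would take $v=(1+\delta)h_{K_\delta}^*$-type modifications on slightly larger compact sets, or use $\max((1+\delta)h-\delta',-1)$, and pass to the limit with Proposition~\ref{prop:w-cap-basic-p}(d). A bounded $\al$-psh function lies in $W^*$ with $dv\wedge d^cv\le dd^c(v^2)/2 + \al$-type bound. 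Precisely, $dd^c(v+1)^2=2d v\wedge d^cv+2(v+1)dd^cv\ge 2dv\wedge d^cv-2(v+1)\al$, so $T=\tfrac12 dd^c(v+1)^2+\al+C\al$ works. This only gives $\|v\|_*^2\le C$, which is too weak. To obtain smallness one should instead take $v=-(-h_K^*)^{s}$ or $\varphi(h_K^*)$ with a well-chosen concave function. Alternatively, one can estimate $\|v\|_{L^2}^2\le\int |h_K^*|\al^n$ and the mass $\int dv\wedge d^cv\wedge\al^{n-1}=\int -v\,dd^cv\wedge\al^{n-1}$. Both quantities are bounded in terms of $\int -h_K^*(\al+dd^ch_K^*)^k\wedge\al^{n-k}$. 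The key step is the known inequality comparing mixed energies, $\int -h\,\al^{n}\lesssim(\int -h(\al+dd^ch)^n)^{1/n}$, obtained by H\"older-type / Cauchy--Schwarz interpolation between mixed Monge--Amp\`ere terms, i.e.\ the log-concavity $I_k\le I_{k-1}^{1/2}I_{k+1}^{1/2}$ up to constants. Combined with \eqref{eq:cap-id}, $cap_\al(K)=\int -h_K^*(\al+dd^ch_K^*)^n$, this yields $\tc(K)\le A\,cap_\al(K)^{1/n}$. Pluripolarity then follows from \eqref{eq:pluripolar-cap}.

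The main obstacle is the upper bound. Proving the interpolation inequality for the mixed energies with uniform constants is the hard part. Justifying that the neighbourhood condition in $\Kc$ can be achieved with an arbitrarily small loss is also delicate.
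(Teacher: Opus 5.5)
You have a genuine gap in the upper bound $\tc(K)\le A[cap_\al(K)]^{1/n}$. The $W^*$-seminorm is $\min\|T\|$ over closed positive currents $T\ge dv\wed d^cv$, and $\|T\|$ depends only on the cohomology class of $T$. It is not the Dirichlet energy $\int_X dv\wed d^cv\wed\al^{n-1}$, which is only a lower bound for it. Your ``alternative'' route estimates exactly the Dirichlet energy, and for $h=h_K^*$ (taken with respect to $\al$) that quantity is cheap to control. Set $I_j=\int_X(-h)\,(\al+dd^ch)^j\wed\al^{n-j}$. Then $I_{j+1}-I_j=\int_X dh\wed d^ch\wed(\al+dd^ch)^j\wed\al^{n-j-1}\ge0$. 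Hence $\int_X h^2\al^n\le I_0\le I_n=cap_\al(K)$ and $\int_X dh\wed d^ch\wed\al^{n-1}\le I_1\le cap_\al(K)$.

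So the mixed-energy inequality you single out as the hard step is trivial, even with exponent $1$. If your identification were valid, the same reasoning would give $\tc(K)\le 2\,cap_\al(K)$. That is false for $n\ge2$. For a small ball $K=\bar B(0,\veps)$ in a chart, $cap_\al(K)\approx(\log 1/\veps)^{-n}$, whereas $\tc(K)\gtrsim(\log 1/\veps)^{-1}$. To see the latter, restrict a competitor to complex lines through $0$: on average the slices of $T$ have mass $\lesssim\|T\|$, while on each line $v$ must rise from $-1$ on a disc of radius $\veps$ to near $0$ on most of the unit disc. This is the sharpness recorded in Remark~\ref{rmk:sharp-ineq}.

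Reparametrizing to $\varphi(h_K^*)$ does not help. Nothing in your sketch produces a dominating closed current in a \emph{small} cohomology class, and the obvious ones built from $h_K^*$, such as $\al+dd^c(h+h^2/2)$, lie in the class $[\al]$.

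The missing idea is to use a competitor that is quasi-psh with respect to a small class. With $M=\sup_X V_K^*\ge1$, the function $u=(V_K^*-M)/M$ is $\om/M$-psh and satisfies $-1\le u\le 0$. So $T=\frac{\om}{M}+dd^cu+\frac12 dd^cu^2\ge du\wed d^cu$ has mass $\frac1M\int_X\om\wed\al^{n-1}$. Moreover $\int_X|u|\,\al^n\lesssim 1/M$ by compactness of sup-normalized $\om$-psh functions; this is Lemma~\ref{lem:star-norm-SZ}. The exponent $1/n$ then comes from the global Alexander--Taylor inequality $1/M\le[cap_\om(K)]^{1/n}$ in \eqref{eq:global-AT-use}, not from any interpolation. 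The fact that $V_K^*\le 0$ holds only q.e.\ on $K$ is handled by Lemma~\ref{lem:cap-refine}, which also disposes of your worry about the neighbourhood condition.

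Your lower bound follows the same integration-by-parts induction as Lemma~\ref{lem:IBP-ineq} and the proof of Theorem~\ref{thm:Vigny}. (The paper quotes the lemma from the literature; Theorem~\ref{thm:Vigny} is its own proof, for compact sets and with constant $4n$.) However, the step that makes the estimate linear is missing. Write $a^2=\int_X dv\wed d^cv\wed S\le\|T\|$ and $b^2=\int_X v^2\,du\wed d^cu\wed S$. If you bound $b$ by a constant using $v^2\le 1$, you only obtain $cap_\al(E)\lesssim\|v\|_{L^2}^2+\|T\|^{1/2}$, i.e.\ $cap_\al\lesssim\tc^{1/2}$.

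The paper takes $u=h_E^*$ and uses $2ab\le 2a^2+b^2/2$. The resulting error is $\frac12\int_X dh\wed d^ch\wed\om_h^{n-1}=\frac12\int_X(-h)\,\om_h^n+\frac12\int_X h\,\om_h^{n-1}\wed\om$. By \eqref{eq:cap-id} it contains $\frac12 cap_\om(E)$, which is absorbed by the left-hand side $\int_E\om_h^n=cap_\om(E)$. The remaining $h$-term cancels in the next step of the induction, and at the end only $\frac12\int_X h\,\om^n\le 0$ is left. For an arbitrary $u$ you could instead keep the weight $v^2$ and check $b^2\le 4a^2+2\int_X v^2(\al+dd^cu)\wed S$, which also allows absorption.

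Finally, your detour through $v\,dd^cv$ is unnecessary and unjustified, since $dd^cv$ need not be a measure for $v\in W^*(X)$. Integrate by parts directly: $\int_X v^2\,dd^cu\wed S=-2\int_X v\,dv\wed d^cu\wed S$.
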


\subsection{Quasi-continuous representative} Since the functions in $W^*(X)$ are defined up to a set of Lebesgue measure zero,
we need to study further their fine properties for applications. To make it precise we define the following terminologies.

\begin{defn} \label{defn:c-quasi-c}
\begin{itemize}
\item
[(a)] A Borel function  $f$ is called quasi-continuous (q.c) in $X$ if there exists for every $\veps> 0$ an open set $G \subset X$ such that $\tc (G)<\veps$ and the restriction $f_{|_{X\setminus G}}$  is continuous  (with respect to the induced topology on $X\setminus G$).
\item
[(b)] A statement is said to hold {\em quasi-everywhere} (q.e.) if there exists a Borel set $P$ of capacity zero, i.e., $\tc (P) =0$, such that the statement is true for every $x\notin P$. 
\end{itemize}
\end{defn}

\begin{remark} By Lemma~\ref{lem:cap-c}  the quasi-continuity and quasi-everywhere properties with respect to $\tc (\cdot)$ are equivalent to the classical ones  in pluripotential theory \cite{BT82, BT87} and \cite{GZ-book}.
\end{remark}

The following notion helps us to get a good representative in each class of functions.

\begin{defn}
Let $f,g$ be Borel measurable functions on $X$. Then,  $g$ is said to be a quasi-continuous modification of $f$ if $g$ is quasi-continuous and $g= f$ a.e. We denote $g$ by $\wt f$ in this case.
\end{defn}

We also need a classical result in potential theory \cite[Lemma~2.14]{FOT94} (see also \cite[Propositon~21]{Vigny}). 
\begin{prop} \label{prop:ae-qe}  Let $D\subset X$ be an open set and $f$ be a quasi-continuous function on $D$. If $f\geq 0$ a.e. on $D$, then $f\geq 0$  quasi-everywhere on  $D$.
\end{prop}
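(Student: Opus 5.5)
This is the classical statement (cf. \cite[Lemma~2.14]{FOT94}), and I would reproduce its standard proof, using only the outer regularity (Lemma~\ref{lem:outer-reg}) and the properties of $\tc(\cdot)$ in Proposition~\ref{prop:w-cap-basic-p}. The key is a local lemma: \emph{if $G\subset X$ is open, $f$ is quasi-continuous and $f\geq 0$ a.e. on $G$, then for every open $U$ with $f_{|_{X\setminus U}}$ continuous, the set $\{f<0\}\cap G\setminus U$ is empty up to a set of capacity zero.} Since $D$ is a countable union of coordinate balls, countable subadditivity (Proposition~\ref{prop:w-cap-basic-p}(b)) reduces everything to such pieces.

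\textbf{Step 1.} Fix $\veps>0$. Using quasi-continuity, choose an open $G_\veps$ with $\tc(G_\veps)<\veps$ such that $f_{|_{X\setminus G_\veps}}$ is continuous. Put $N_\veps = \{x\in D\setminus G_\veps : f(x)<0\}$. By continuity of $f$ on the closed set $X\setminus G_\veps$, there is an open set $O\subset D$ with $O\cap (X\setminus G_\veps) = N_\veps$; for instance $O=\{f<0\}\cap D$ relative to $X\setminus G_\veps$, enlarged to an open set of $X$. Then $O\setminus G_\veps = N_\veps$ and $O\subset N_\veps\cup G_\veps$.

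\textbf{Step 2, the main point.} I claim that $\tc(O\cup G_\veps)=\tc(G_\veps)$. Let $v\in\Kc(G_\veps)$, so $v\leq -1$ a.e. on $G_\veps$ and $v\leq 0$. Since $f\geq0$ a.e. on $D$ while $f<0$ on $N_\veps$, the set $N_\veps$ has Lebesgue measure zero. Hence $O\setminus G_\veps$ is null, so $v\leq -1$ a.e. on the open set $O\cup G_\veps$. This gives $v\in \Kc(O\cup G_\veps)$, because for open sets membership in $\Kc$ only requires the a.e. inequality, as noted after the definition of $\Kc$. Therefore
\[\notag
\tc(N_\veps)\leq \tc(O\cup G_\veps)\leq \tc(G_\veps)<\veps .
\]
This is exactly where working with open sets and a.e. conditions matters, and it is the step I expect to require care: one must check that $O$ can genuinely be chosen open in $X$. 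I would take $O = G_\veps \cup \big(W\setminus \overline{?}\big)$; concretely, let $W$ be an open set in $X$ with $W\cap(X\setminus G_\veps)=\{f<0\}\cap D\setminus G_\veps$, which exists by relative openness of this set, and set $O=W\cap D$.

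\textbf{Step 3.} The set $\{x\in D: f(x)<0\}$ is contained in $N_\veps\cup G_\veps$, so its capacity is at most $2\veps$ by subadditivity. Letting $\veps\to0$ and using monotonicity (Proposition~\ref{prop:w-cap-basic-p}(a)), we get $\tc(\{f<0\}\cap D)=0$. Borel measurability of $f$ makes this set Borel, so it is an admissible exceptional set $P$ in Definition~\ref{defn:c-quasi-c}(b), and $f\geq0$ quasi-everywhere on $D$.
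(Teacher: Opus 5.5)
Your argument is correct and is essentially the classical proof of \cite[Lemma~2.14]{FOT94}, which the paper cites instead of proving. The key point is the same: the open set $O\cup G_\veps$ differs from $G_\veps$ only by the Lebesgue-null set $N_\veps$, so $\Kc(G_\veps)\subset\Kc(O\cup G_\veps)$, and hence $\{f<0\}\cap D\subset O\cup G_\veps$ has capacity less than $\veps$.

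A few cosmetic fixes. The opening ``local lemma'' and the reduction to coordinate balls are never used and can be dropped. The half-finished first attempt at defining $O$ should be deleted. Since $f$ is defined only on $D$, you should require continuity of $f_{|_{D\setminus G_\veps}}$ rather than $f_{|_{X\setminus G_\veps}}$; your choice $O=W\cap D$ works verbatim in that setting.
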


Thanks to this result, two quasi-continuous modifications are equal q.e. Hence, they are equal outside a pluripolar set by Lemma~\ref{lem:cap-c}. The existence of a quasi-continuous modification with respect to the Bedford-Taylor capacity was obtained by  Vigny \cite[Theorem~22]{Vigny}.  We state below the case of bounded functions which is enough for our applications.  By using the local result of Dinh, Marinescu and Vu \cite{DMV} we also provide  a different proof for the reader's convenience.

If $f \in W^*(X)$, then for every holomorphic coordinate chart $\Om \subset \subset X$, $f \in W^*(\Om)$ the local complex Sobolev space (see \cite{DMV} and \cite{Vigny}). The definition of $W^*(\Om)$ is very similar. Namely,  let $\be = dd^c |z|^2$ be the standard K\"ahler metric in $\bC^n$. For $f\in W^{1,2}(\Om,\bR),$ let denote $\Ga_f(\Om)$ be the set of all closed positive $(1,1)$-currents $T$ in $\Om$ satisfying
 \[\label{eq:w-ineq-loc}
	df \wed d^c f  \leq T \quad\text{weakly in } \Om.
\]

\begin{defn}\label{defn:w-local}
A function $f\in W^{1,2}(\Om,\bR)$  belongs $W^*(\Om)$ if   there exists $T \in \Ga_f (\Om)$ such that $\|T\|_\Om = \int_\Om T \wed \be <+\infty$. \end{defn}
The local $W^*(\Om)$-norm is given by
\[\label{eq:w-norm-loc}
	\|f\|_{*,\Om}^2 = \|f\|_{L^2(\Om)}^2 + \min\{\|T\|_\Om : T \in \Ga_f(\Om) \text{ and } \|T\|_\Om <+\infty\},
\]
It was shown in \cite[Proposition~1]{Vigny} that $(W^*(\Om), \|\cdot\|_{*,\Om})$ is also a Banach space.

\begin{thm} \label{thm:quasi-mod-bdd} Let $f\in W^*(X) \cap L^\infty(X)$. Then, there exists a quasi-continuous modification $\wt f \in W^*(X) \cap L^\infty(X)$. Moreover, if $g$ is another quasi-continuous modification of $f$, then $g =\wt f$ outside a pluripolar set.
\end{thm}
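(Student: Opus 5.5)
We first establish existence by working locally, and then obtain uniqueness from Proposition~\ref{prop:ae-qe}.

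\emph{Reduction to local pieces.} Fix finite coverings $D_\ell\subset\subset\Om_\ell$ by holomorphic charts, with $1\le \ell\le N$. Choose a smooth partition of unity $\{\chi_\ell\}$ subordinate to $\{D_\ell\}$. Since $f$ is bounded and each $\chi_\ell$ is smooth, we have $\chi_\ell f\in W^*(X)\cap L^\infty$. Indeed, $d(\chi_\ell f)\wed d^c(\chi_\ell f)\le 2\chi_\ell^2\, df\wed d^c f + 2 f^2\, d\chi_\ell\wed d^c\chi_\ell\le 2\|\chi_\ell\|_\infty^2 T + C\|f\|_\infty^2\al$. The restriction $f_{|\Om_\ell}$ belongs to $W^*(\Om_\ell)\cap L^\infty$.

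\emph{Local step.} By the local result of Dinh--Marinescu--Vu \cite{DMV}, for $g\in W^*(\Om)\cap L^\infty$ one can find smooth bounded $g_j$ such that $g_j\to g$ and the tail $\{|g_j-g_k|>\delta\}$ has small local relative Bedford--Taylor capacity. Concretely, I would take standard convolutions $g_j=g*\rho_{1/j}$ on $D_\ell$. The estimate I would aim for is
\[
cap(\{|g_j-g|>\delta\}\cap D,\Om)\lesssim \delta^{-2}\bigl(\|g_j-g\|_{L^2}+\text{(oscillation of the currents)}\bigr).
\]
It comes from the inequality $\int \phi^2(dd^c u)^n\lesssim$ energy, which is available because $d\phi\wed d^c\phi\le T$ with $T$ closed positive. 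After passing to a subsequence with summable capacities, the standard Borel--Cantelli argument shows that $g_j$ converges uniformly outside open sets of arbitrarily small $cap(\cdot,\Om_\ell)$. The limit $\wt g$ is therefore quasi-continuous on $D_\ell$ and equals $g$ a.e. This is the step I expect to be the main obstacle, namely the capacity estimate for level sets in terms of the $W^*$-norm. I would simply import it from \cite{DMV}, since the paper says it uses that local result.

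\emph{Gluing.} Set $\wt f=\sum_\ell \chi_\ell\wt{f_\ell}$, where $\wt{f_\ell}$ is the quasi-continuous modification of $f$ on $D_\ell$ obtained above. Given $\veps>0$, take open sets $G_\ell$ with $cap(G_\ell\cap D_\ell,\Om_\ell)<\veps/N$ such that each $\wt{f_\ell}$ is continuous off $G_\ell$, and put $G=\cup G_\ell$. Then $cap_{\rm BT}(G)<\veps$, so \eqref{eq:cap-loc-vs-glob} gives $cap_\al(G)<A_0\veps$. Lemma~\ref{lem:cap-c} then gives $\tc(G)\le A(A_0\veps)^{1/n}$. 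Hence $\wt f$ is continuous on $X\setminus G$ and quasi-continuous in the sense of Definition~\ref{defn:c-quasi-c}. Moreover $\wt f=f$ a.e., so it is the same element of $W^*(X)$. We may also truncate $\wt f$ at $\pm\|f\|_\infty$ without losing quasi-continuity, which keeps $\wt f$ bounded.

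\emph{Uniqueness.} If $g$ is another quasi-continuous modification of $f$, then $g-\wt f$ and $\wt f-g$ are quasi-continuous and vanish a.e. Proposition~\ref{prop:ae-qe} gives $g=\wt f$ outside a Borel set $P$ with $\tc(P)=0$. By Lemma~\ref{lem:cap-c}, $P$ is pluripolar.
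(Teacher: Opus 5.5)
Your proposal is correct and follows the paper's route: localize with a partition of unity, import the local quasi-continuity theorem of \cite{DMV} for $cap(\cdot,\Om)$, pass to $cap_\al$ via \eqref{eq:cap-loc-vs-glob} and to $\tc$ via Lemma~\ref{lem:cap-c}, and obtain uniqueness from Proposition~\ref{prop:ae-qe} together with Lemma~\ref{lem:cap-c}. Two minor fixes are needed. First, $cap(G_\ell\cap D_\ell,\Om_\ell)<\veps/N$ does not directly give $cap_{\rm BT}(G)<\veps$, because $G\cap D_\ell$ also contains pieces of $G_k$ for $k\neq\ell$; replace $G_\ell$ by $G_\ell\cap D_\ell$, bound each $cap_\al(G_\ell)$ by the local--global comparison, and then use subadditivity of $cap_\al$, as the paper does. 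Second, your convolution/Borel--Cantelli heuristic for the local step would not work as written: smooth functions are not dense in $W^*$ \cite[Proposition~7]{Vigny}, so the current term in your displayed estimate need not tend to zero, and it is the citation of \cite{DMV} that actually carries that step.
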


\begin{proof} By Lemmma~\ref{lem:cap-c} it is enough to prove the quasi-continuity with respect to $cap_\al(\cdot)$. 
 Let $\chi$ be a cut-off function on $X$.  Since $f$ is bounded, it follows that $\chi f$ belongs to $W^*(X)$.  By partition of unity and subadditivity of $cap_\al (\cdot)$ (see e.g. \cite{GZ-book}), it is enough to assume $f$ has a compact support in  a holomorphic coordinate ball $\Om\subset \subset X$. Clearly, $f$ restricted to $\Om$ belongs to  the local complex Sobolev space $W^*(\Om)$. By \cite[Theorem~2.10]{DMV}, there exists a quasi-continuous modification $\wt f \in W^*(\Om)$ with respect to $cap(\cdot, \Om)$ whose support is compact in $\Om$. The equivalence \eqref{eq:cap-loc-vs-glob} implies that this function is a desired one. 

Let us verify the second statement. Assume $\wt f$ is quasi-continuous and $g= \wt f$ a.e. Then, it follows from Proposition~\ref{prop:ae-qe} that $g = \wt f $ on $X\setminus F$ for  a Borel subset $F\subset X$ satisfying $\tc (F) =0$. Hence, $cap_\al(F)=0$ by  Lemma~\ref{lem:cap-c}. Now it follows from \eqref{eq:cap-loc-vs-glob}  and \cite{BT82} that $F$ is locally pluripolar. Finally, by the characterization \cite[Theorem~12.5]{GZ-book},  $F$ is globally pluripolar, i.e., there exists a function $u\in PSH(X,\om)$ such that $F \subset \{u=-\infty\}$.
\end{proof}

A crucial technical tool to study a capacity is its extremal function. An analogous identity of \eqref{eq:cap-id} for $\tc(\cdot)$ was obtained in \cite[Corollary~31]{Vigny}. We prove here also  the uniqueness which answers the question given there for open sets.

\begin{lem} \label{lem:extr-funciton-o} Assume $E\subset X$ is an open subset. There is a unique $\Phi_E \in W^*(X)$ such that 
$\tc (E) = \|\Phi_E\|_*^2$. Moreover, $-1\leq \Phi_E \leq 0$ a.e. and $\Phi_E=-1$ a.e. on $E$.
\end{lem}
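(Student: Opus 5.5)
Existence will come from the direct method in the calculus of variations, and uniqueness from strict convexity of the squared norm on the convex set $\Kc(E)$. Since $E$ is open, $\Kc(E)=\{v\in W^*(X): v\le -1 \text{ a.e. on } E,\ v\le 0\}$, and by \eqref{eq:w-cap-use} we may restrict to $\Kc'(E)$. By Proposition~\ref{prop:w-cap-basic-p}(c) the set $\Kc(E)$ is nonempty (the constant $-1$ belongs to it). Take a minimizing sequence $v_j\in\Kc'(E)$ with $\|v_j\|_*^2\to \tc(E)$. By Lemma~\ref{lem:w-compactness}, after passing to a subsequence $v_j\to \Phi$ weakly in $W^{1,2}(X,\bR)$, with $\Phi\in W^*(X)$ and $\|\Phi\|_*\le\liminf\|v_j\|_*$. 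The pointwise constraints pass to the weak limit. We have $-1\le v_j\le 0$ on $X$ and $v_j+1\le 0$ a.e.\ on $E$, and these are closed convex conditions in $L^2$; alternatively one applies Proposition~\ref{prop:weak-conv-cp} with $f_j=v_j+\tfrac12$, $g_j=\tfrac12$ on $X$, and with $f_j=v_j+1$, $g_j=0$ on $E$. Hence $-1\le\Phi\le0$ a.e.\ and $\Phi=-1$ a.e.\ on $E$, so $\Phi\in\Kc(E)$ and $\|\Phi\|_*^2\le \tc(E)$. The reverse inequality holds by definition, so $\Phi_E:=\Phi$ is a minimizer.

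For uniqueness, suppose $u,w\in\Kc(E)$ are both minimizers, and set $h=(u+w)/2\in\Kc(E)$; convexity of $\Kc(E)$ is clear. Choose minimizing currents $T_u\in\Ga_u$ and $T_w\in\Ga_w$. The key inequality is
\[\notag
dh\wedge d^ch = \tfrac12\,du\wedge d^cu+\tfrac12\,dw\wedge d^cw-\tfrac14\, d(u-w)\wedge d^c(u-w),
\]
which gives $dh\wedge d^c h\le \tfrac12(T_u+T_w)$. Thus $\tfrac12(T_u+T_w)\in\Ga_h$, and $\|h\|_*^2\le \|h\|_{L^2}^2+\tfrac12(\|T_u\|+\|T_w\|)$. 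The $L^2$ part satisfies the parallelogram identity $\|h\|_{L^2}^2=\tfrac12\|u\|_{L^2}^2+\tfrac12\|w\|_{L^2}^2-\tfrac14\|u-w\|_{L^2}^2$. Combining these,
$$\tc(E)\le\|h\|_*^2\le \tfrac12\|u\|_*^2+\tfrac12\|w\|_*^2-\tfrac14\|u-w\|_{L^2}^2=\tc(E)-\tfrac14\|u-w\|_{L^2}^2.$$
Hence $u=w$ a.e., i.e.\ they are equal as elements of $W^*(X)$.

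The main obstacle is this uniqueness step. The $W^*$-norm is not a Hilbert norm, because the current part is an infimum over a nonlinear set, so strict convexity is not automatic. The point is that the $L^2$ summand alone already provides the needed strict convexity, while the current part is merely convex by the identity above; one must check that the dropped term $-\tfrac14 d(u-w)\wedge d^c(u-w)$ is a negative $(1,1)$-form, so that the comparison $dh\wedge d^ch\le\frac12(T_u+T_w)$ holds weakly. Existence needs only the lower semicontinuity given by Lemma~\ref{lem:w-compactness}, so I expect it to be routine.
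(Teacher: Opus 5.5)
Your proposal is correct and follows essentially the same route as the paper: existence by the direct method via Lemma~\ref{lem:w-compactness} and Proposition~\ref{prop:weak-conv-cp} (using that $E$ is open to land back in $\Kc(E)$), and uniqueness by testing the midpoint $(u+w)/2$ with the current $\tfrac12(T_u+T_w)$ and extracting $\|u-w\|_{L^2}=0$ from the $L^2$ parallelogram identity. Taking the minimizing sequence in $\Kc'(E)$, as you do, is what actually gives $-1\le\Phi_E$ and $\Phi_E=-1$ a.e. on $E$; the paper's proof leaves this restriction implicit.
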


\begin{proof} We first prove the existence. Let $\{u_j\}_{j\geq 1}$ be a sequence in $\Kc(E)$ such that $\tc(E) = \lim_{j\to \infty} \|u_j\|_*^2$. It follows from Lemma~\ref{lem:w-compactness} that $u_j$ converges weakly in $W^{1,2}(X)$ to  $u \in W^*(X)$ and $\|u\|_* \leq \liminf_j \|u_j\|_*$. Since $u_j \to u$ weakly in $L^2(X)$,  it follows from Proposition~\ref{prop:weak-conv-cp} that $u =-1$ a.e. in $E$ and $u\leq 0$ in $X$. Note that  the proof of that proposition used only the properties of Lebesgue points of functions in $L^2(X)$. Since $E$ is open,  $u\in \Kc(E)$ (this is the sole place that the openness of $E$ is used). By definition of  $\tc(E),$ we get $\tc(E) = \|u\|_*^2$. Here, we can choose $\Phi_E = \wt u$ which is quasi-continuous and $\Phi_E =-1$ q.e. on $E$.

Next,  we show the uniqueness. Suppose $u_1$ and $u_2$ are such two functions. Let $T_1$ and $T_2$ are the corresponding closed positive currents such that
$$
	\tc(E) = \|u_1\|_{L^2}^2 + \|T_1\| = \|u_2\|_{L^2}^2 + \|T_2\|.
$$  
Define $v = (u_1+u_2)/2$. Then, 
$
	dv \wed d^c v \leq (T_1 +T_2)/2 =:T.
$
So, $v\in \Kc(E)$ and 
$$\begin{aligned}
	\tc(E) &\leq \|v\|_{L^2}^2 + \|T\| \\&=\frac{1}{4} (\|u_1\|_{L^2}^2 + \| u_2\|_{L^2}^2) + \frac{1}{2} \|u_1u_2\|_{L^1} + \frac{1}{2} (\|T_1\|+ \|T_2\|). 
\end{aligned}$$
This implies that 
$$
	2\int_X u_1 u_2  \al^n \geq \int_X u_1^2 \;\al^n + \int_X u_2^2 \; \al^n.
$$
Thus, $u_1 = u_2$ a.e. If $u_1, u_2$ are quasi-continuous, then  $u_1=u_2$ q.e. by Proposition~\ref{prop:ae-qe}.
\end{proof}

Now we have a refined definition of capacity.
 
\begin{lem} \label{lem:cap-refine} Let $E \subset X$ be a Borel set. Then,
$$
	\tc(E) = \inf\{\|v\|_*^2 : v \in 
	\wt\Kc(E)\},
$$
where 
$$
 	\wt\Kc(E) = \left\{ v \in W^*(X) \;\rvert\; \wt v \leq  -1 \text{ q.e. on $E$ and } \wt v\leq 0 \right\}.
$$
\end{lem}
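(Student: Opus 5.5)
The plan is to prove the two inequalities separately. Write $\tc'(E)$ for the infimum on the right-hand side, and use throughout that $\wt v = v$ a.e. for any quasi-continuous modification.

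\emph{Step 1: $\tc'(E)\le \tc(E)$.} By \eqref{eq:w-cap-use} it suffices to take $v\in\Kc'(E)$. Such a $v$ is bounded, so Theorem~\ref{thm:quasi-mod-bdd} gives a quasi-continuous modification $\wt v$. There is an open $G\supset E$ with $v\le -1$ a.e. on $G$, and $v\le 0$ a.e. on $X$. Applying Proposition~\ref{prop:ae-qe} to the quasi-continuous functions $-1-\wt v$ on $G$ and $-\wt v$ on $X$ gives $\wt v\le -1$ q.e. on $G\supset E$ and $\wt v\le 0$ q.e. Hence $\Kc'(E)\subset\wt\Kc(E)$, which yields the inequality.

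\emph{Step 2: $\tc(E)\le \tc'(E)$.} Fix $v\in\wt\Kc(E)$.
\begin{enumerate}
\item Replace $v$ by $\max\{v,-1\}$. This does not increase the norm, the maximum of quasi-continuous functions is quasi-continuous, and it is a modification of $\max\{v,-1\}$. So we may assume $v$ is bounded and quasi-continuous with $-1\le v\le 0$, with $v=-1$ outside a set $P\subset E$ satisfying $\tc(P)=0$.
\item Fix $\veps,\delta>0$. By quasi-continuity choose an open $G_1$ with $\tc(G_1)<\veps$ such that $v|_{X\setminus G_1}$ is continuous. By Lemma~\ref{lem:outer-reg} choose an open $G_2\supset P$ with $\tc(G_2)<\veps$.
\item Set $U=G_1\cup\{x\in X\setminus G_1: v(x)<-1+\delta\}$. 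The second set is relatively open in $X\setminus G_1$, so $U$ is open in $X$. Moreover $U\cup G_2$ is an open set containing $E$.
\item Let $G=G_1\cup G_2$ and let $\Phi=\Phi_G$ be the extremal function of Lemma~\ref{lem:extr-funciton-o}. By subadditivity, $\|\Phi\|_*^2=\tc(G)\le 2\veps$, and $\Phi=-1$ a.e. on $G$.
\item Define $u=\min\{v/(1-\delta),\Phi\}$.
\end{enumerate}
On $U\setminus G_1$ we have $v/(1-\delta)<-1$ pointwise, and on $G$ we have $\Phi=-1$ a.e. Hence $u\le -1$ a.e. on the open set $U\cup G_2\supset E$. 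Also $u\le 0$, so $u\in\Kc(E)$.

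For the norm of $u$: if $T_1\in\Ga_{v}$ and $T_2\in\Ga_\Phi$, then
$$du\wed d^cu\le (1-\delta)^{-2}T_1+T_2,$$
since a.e. $du$ equals one of the two gradients. Also $|u|^2\le (1-\delta)^{-2}v^2+\Phi^2$. Therefore
$$\tc(E)\le \|u\|_*^2\le (1-\delta)^{-2}\|v\|_*^2+2\veps.$$
Letting $\veps,\delta\to 0$ and taking the infimum over $v$ gives $\tc(E)\le\tc'(E)$.

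The main obstacle is Step 2. The condition $\wt v\le-1$ holds only pointwise q.e. on an arbitrary Borel set $E$, while $\Kc(E)$ demands an a.e. inequality on an \emph{open} neighborhood. The route above addresses this in two ways. The $\delta$-scaling combined with quasi-continuity produces an open set. The small-capacity sets $G_1$ (the exceptional set of quasi-continuity) and $G_2$ (covering $P$) are absorbed using the extremal function of an open set, via the elementary $\min$ estimate. The only point needing care is checking that $du\wed d^c u$ is dominated by the sum of the currents for the $\min$ of two $W^*$ functions.
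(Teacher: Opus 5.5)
Your argument is correct and follows the same scheme as the paper's proof. Both remove a small-capacity open set $G$ off which $\wt v$ is continuous, take $G$ together with the relatively open sublevel set $\{\wt v<-1+\delta\}\setminus G$ as an open neighbourhood of $E$, and patch with the extremal function $\Phi_G$ of Lemma~\ref{lem:extr-funciton-o}.

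The differences are technical. The paper uses $(h+\Phi_G)/(1-\veps)$ with the triangle inequality for $\|\cdot\|_*$, whereas you use $\min\{v/(1-\delta),\Phi_G\}$ with the lattice bound on gradients. You also make explicit, via outer regularity, the covering of the q.e.\ exceptional set on $E$, which the paper silently absorbs into its choice of $G$.
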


\begin{proof} If $v\leq -1$ a.e. on a neighborhood of $E$, then $\wt v \leq -1$ q.e. by Propostion~\ref{prop:ae-qe}. So,  $\Kc(E) \subset \wt\Kc(E)$ and  $\tc (E)$ is greater than the infimum on the right hand side.
To show the reverse inequality let us consider $h\in \wt\Kc_E$ and fix a quasi-continuous modification $\wt h$ of $h$ (Theorem~\ref{thm:quasi-mod-bdd}). Given $\veps>0,$ we choose an open set $G:= G_\veps$ such that $\tc (G)<\veps$ and $\wt h$ restricted to $X\setminus G$ is continuous and $\wt h \leq -1$ on $E \cap (X\setminus G)$. Denote  by $u$ the function $\Phi_{G}$ of  the open set $G$ in Lemma~\ref{lem:extr-funciton-o}. The set
$$
	D_\veps = \{x\in X \setminus G: \wt h < -1+\veps \} \cup G
$$
is open and $E\subset D_\veps$. Moreover, $h + u \leq -1 + \veps$ a.e. on $D_\veps$ and it is negative in $X$.
Therefore,
$$\begin{aligned}
	\tc (E) &\leq \tc (D_\veps) \leq \frac{\|h+u\|_*^2}{(1-\veps)^2}  \\& \leq \frac{ (\|h\|_* +\|u\|_*)^2}{(1-\veps)^2} \\ &\leq \frac{ (\|h\|_* + \sqrt{\veps})^2}{(1-\veps)^2}.
\end{aligned}$$
By letting $\veps \searrow 0$, $\tc(E) \leq \|h\|_*^2$. Since $h$ is arbitrary, the proof is completed.
\end{proof}

\begin{remark}
 Vigny \cite[Theoerem~30]{Vigny} proved that  this functional capacity is a Choquet capacity.
As a consequence, $\tc (\cdot)$ is also inner regular, i.e.,
\[\notag\label{eq:inner-reg}
	\tc(E)= \sup\left\{ \tc(K) : K \subset E, \; K\text{ is compact}\right\}.
\]
\end{remark}
\subsection{The Alexander-Taylor capacity} Let $K\subset X$ be a compact subset. The global Siciak-Zaharjuta extremal function associated with $K$ is given by 
\[\label{eq:SZ}
	V_K(z) = \sup\{v(z): v\in PSH(X,\om), v\leq 0 \text{ on K}\}.
\]
Denote by $V_K^*$  the upper semi-continuous regularization of $V_K$. Recall that the Alexander-Taylor capacity is given by 
\[\notag\label{eq:AT-cap}
	T_\om (K) = \exp \left( - \sup_X V_K^* \right).
\]
This is also a Choquet capacity and $K$ is pluripolar if and only if $T_\om(K) =0$. Equivalently, 
\[ \label{eq:SZ-polar} K \text{ is pluripolar } \Leftrightarrow 
\sup_X V_K^* =+\infty.
\] 
Similar to the local case \cite{AT84}, it follows from \cite[Lemma~12.2, Lemma~12.3]{GZ-book} that the global Alexander-Taylor inequality is well comparable with the global Bedford-Taylor capacity. Namely, there exists a uniform constant $A_1>0$ such that for every compact sets $K \subset X$,
\[\label{eq:global-AT}
\exp\left(\frac{- A_1}{cap_\om(K)} \right)	\leq T_\om(K) \leq  \exp\left( \frac{-1}{ [cap_\om(K)]^\frac{1}{n}} \right).
\]
Notice that since  $\int_X \om^n =1,$ we always have $cap_\om(K) \leq cap_\om(X) = 1$.

\section{Comparison of capacities}
\label{sec:comparisons}

In this section we obtain comparisons between the functional capacity $\tc(\cdot)$ and two other classical capacities in pluripotential theory, the Bedford-Taylor capacity and  the Alexander-Taylor capacity. We emphasize that the arguments are global in nature. This is why  it works for $\om,$ where $\om$ is merely a {\em semi-positive and  big} form.

We start with  a technical result for estimating to the integral of functions in $W^*(X)$ against Monge-Amp\`ere measures of bounded psh functions. 
Let $\phi \in PSH(X,\om)$ be such that $-1\leq \phi \leq 0$. Write 
$$	
	\om_\phi := \om + dd^c\phi.
$$For $f \in W^*(X) \cap L^\infty(X)$, we define the integral 
\[\label{eq:defn-int}
\int_X f \; \om_\phi^n := \int_X \wt f \; \om_\phi^n.
\]
This definition is well-defined. In fact, 
if $f=g$ a.e., then $\wt f = \wt g$ q.e.  by Proposition~\ref{prop:ae-qe}. Since the Monge-Amp\`ere measure $\om_\phi^n$ does not charge on pluripolar sets,  the integral  on the right hand side does not depends on the quasi-continuous modification we chose. 

\begin{lem} \label{lem:IBP-ineq}Let $h \in PSH(X,\om)$ be such that $-1 \leq h \leq 0$. Let $v \in W^*(X)$ satisfy  $-1\leq v \leq 0$ and $dv \wed d^c v \leq T$ for a closed positive $(1,1)$-current $T$. Then, 
$$
\begin{aligned}
	\int_X v^2 \om_h^n 
&\leq 	\int_X v^2 \om_h^{n-1}\wed \om 
+ 2 \int_X T \wed\om^{n-1} \\ 
&\quad 	+ \frac{1}{2} \int_X -h \om_h^n + \frac{1}{2} \int_X h\om_h^{n-1}\wed \om.
\end{aligned}$$
\end{lem}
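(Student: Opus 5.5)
The plan is to peel off one factor of $\om_h$ by writing $\om_h^n = \om_h^{n-1}\wed\om + dd^c h\wed\om_h^{n-1}$, integrate by parts once, and absorb the resulting cross term by Cauchy--Schwarz. Set $S=\om_h^{n-1}$. Then
$$
\int_X v^2\om_h^n-\int_X v^2\om_h^{n-1}\wed\om=\int_X v^2\, dd^c h\wed S=-\int_X d(v^2)\wed d^c h\wed S=-2\int_X v\, dv\wed d^c h\wed S .
$$
By Cauchy--Schwarz for the positive current $S$, and the weighted inequality $2ab\le 2a^2+\tfrac12 b^2$, the modulus of the last term is at most
$$
2\int_X dv\wed d^c v\wed S+\frac12\int_X v^2\, dh\wed d^c h\wed S\le 2\int_X T\wed S+\frac12\int_X dh\wed d^c h\wed S,
$$
where we used $dv\wed d^cv\le T$ and $v^2\le 1$.

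Next I evaluate the two remaining terms. Integration by parts gives
$$
\int_X dh\wed d^c h\wed S=-\int_X h\, dd^c h\wed S=\int_X -h\,\om_h^n+\int_X h\,\om_h^{n-1}\wed\om ,
$$
which produces exactly the last two terms of the lemma with coefficient $\tfrac12$. For the $T$-term, $T$ is closed and $\om_h^{n-1}-\om^{n-1}$ is $dd^c$ of a form with bounded coefficients in $h$; Stokes' theorem, with $T\wed\cdot$ defined in the Bedford--Taylor sense since $h$ is bounded, therefore gives $\int_X T\wed\om_h^{n-1}=\int_X T\wed\om^{n-1}$. Collecting terms yields the stated inequality.

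The main obstacle is justifying these manipulations when $h$ and $v$ are not smooth. Here $v$ is only in $W^*(X)$ and is interpreted through its quasi-continuous modification, as in \eqref{eq:defn-int}, and $\om$ is only semi-positive. I would first take $h$ smooth. This is possible by Demailly's regularization in $PSH(X,\om+\veps\al)$ with $h_j\searrow h$. I would run the argument for the form $\om+\veps\al$ and pass to the limit $j\to\infty$, then $\veps\to0$. Convergence of $\wt v^2\,\om_{h_j}^n$ to $\wt v^2\,\om_h^n$ uses that $\wt v$ is bounded and quasi-continuous (Theorem~\ref{thm:quasi-mod-bdd}), together with Bedford--Taylor convergence for decreasing sequences and the fact that capacities are small off the continuity set, via Lemma~\ref{lem:cap-c}. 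The same reasoning applies to the terms involving $h\,\om_h^{n-1}\wed\om$.

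For fixed smooth $h$, the identities above hold for $v\in W^{1,2}$, with $dv\wed d^cv\wed S$ an $L^1$ form, because $S$ is then smooth. To be safe, I would instead regularize $v$ locally by convolution in charts, using a partition of unity, and pass to the limit in $W^{1,2}$. The inequality $\int dv\wed d^cv\wed S\le\int T\wed S$ needs no regularization, since it is tested against a smooth positive form. I expect the limit in the left-hand side $\int\wt v^2\om_{h_j}^n$ as $h_j\searrow h$ to be the delicate point.
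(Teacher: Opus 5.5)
Your proposal is correct and follows the paper's proof essentially step for step. It uses the same splitting $\om_h^n=\om_h^{n-1}\wed\om+dd^c h\wed\om_h^{n-1}$, integration by parts with the weighted Cauchy--Schwarz bound $2\sqrt{AB}\le 2A+\tfrac12 B$, the Stokes identity $\int_X T\wed\om_h^{n-1}=\int_X T\wed\om^{n-1}$ for smooth $h$, and the same reductions (perturb $\om$ to $\om+\de\al$, approximate $v$ in $W^{1,2}$ while $h$ is smooth, then let smooth $h_j\searrow h$); the step you flag as delicate is exactly what the paper settles by quoting the Bedford--Taylor convergence theorem for decreasing sequences against the bounded quasi-continuous function $\wt v^{\,2}$.
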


\begin{proof}  The proof goes through  by integration by parts.  We first prove it for  $h$ and $v$ being smooth and then the general case via approximation. The subtle point is that we may not find a good smooth sequence in $W^*(X)$ which converges strongly to $v$ in $W^*(X)$. Fortunately, we can resolve this by using the quasi-continuity of $v$.   It follows from Theorem~\ref{thm:quasi-mod-bdd}  and Lemma~\ref{lem:cap-c} that $v$ admits a quasi-continuous representation $\wt v$ with respect to the capacity $cap_\al(\cdot)$. By definition~\eqref{eq:defn-int}, we can assume $v$ is quasi-continuous itself. Moreover, by considering $\om:=\om + \de \al$ and then letting $\de \to 0^+,$ we may assume that $\om$ is a K\"ahler form. 

{\bf Step 1a}: Assume both $v$ and $h$ are smooth. Compute
\[\label{eq:1st-ineq-id}
	\int_X v^2 \om_h^n = \int_X v^2 \om_h^{n-1} \wed\om + \int_X v^2 dd^c h \wed \om_h^{n-1}.
\]
By the Stokes theorem and Cauchy-Schwarz inequality, we have
\[\label{eq:1st-ineq-IBP}\begin{aligned}
	\int_X v^2 dd^c h \wed \om_h^{n-1} 
&= 2\int_X (-v) dv\wed d^c h \wed \om^{n-1}_h \\
&\leq 2 \left( \int_X dv \wed d^c v \wed \om_h^{n-1}\right)^\frac{1}{2} \left( \int_X v^2dh \wed d^c h \wed \om_h^{n-1}\right)^\frac{1}{2} \\
&\leq 2 \int_X dv \wed d^c v\wed \om_h^{n-1} + \frac{1}{2} \int_X d h \wed d^c h \wed \om_h^{n-1},
\end{aligned}\]
where in the last inequality we  used the fact that $0\leq v^2 \leq 1$.
Using the Stokes theorem again,
$$\begin{aligned}
	\int_X dh \wed d^c h \wed \om_h^{n-1} 
&= \int - h dd^c h\wed\om_h^{n-1} \\
&= \int_X -h \om_h^n + \int_X h \om_h^{n-1} \wed \om.
\end{aligned}$$
Combining this, \eqref{eq:1st-ineq-id} and \eqref{eq:1st-ineq-IBP}, we get the conclusion of the lemma.

{\bf Step 1b}: Assume only $h$ is smooth. Let $\{v_j\}_{j\geq 1}$, $0\leq v_j \leq 1$, be a smooth sequence which converges strongly to $v$ in the usual Sobolev norm of $W^{1,2}(X,\bR)$. It follows from \eqref{eq:1st-ineq-IBP}  that
$$
	\int_X v_j^2 dd^c h \wed \om_h^{n-1} \leq 2 \int_X dv_j \wed d^c v_j \wed \om_h^{n-1}+  \frac{1}{2} \int_X dh \wed d^c h \wed \om_h^{n-1}.
$$
Letting $j\to \infty,$ we conclude that  \eqref{eq:1st-ineq-IBP} holds for $v\in W^*(X)$ and $h$ is smooth. Combining it with the assumption $dv\wed d^c v \leq T,$ we get
$$
	\int_X v^2 \om_h^n  \leq \int_X v^2 \om_h^{n-1}\wed \om + 2 \int T \wed \om_h^{n-1} +  \frac{1}{2} \int_X dh \wed d^c h \wed \om_h^{n-1}.
$$
By  integration by parts and Stokes' theorem,
$$\begin{aligned}
&	\int_X dh\wed d^c h \wed \om_h^{n-1} = \int_X -h \om_h^n + \int_X h \om_h^{n-1} \wed \om, \\
&	\int_X T\wed \om_h^{n-1} = \int_X T\wed \om^{n-1}.
\end{aligned}$$
 Finally, we conclude 
\[ \label{eq:1st-ineq-red}\begin{aligned}
	\int_X v^2 \om_h^n 
&\leq 	\int_X v^2 \om_h^{n-1}\wed \om  	+ 2 \int_X T \wed\om^{n-1} \\ 
&\quad 	+ \frac{1}{2} \int_X -h \om_h^n + \frac{1}{2} \int_X h \om_h^{n-1}\wed \om.
\end{aligned}\]

{\bf Step 2:} Lastly, we remove the smoothness assumption on $h$ in \eqref{eq:1st-ineq-red}. This is where we use the quasi-continuity of $v^2 \in W^*(X)$. Let $\{h_j\}_{j\geq 1}$ be a sequence of smooth $\om$-psh functions decreasing to $h$ (see. e.g., \cite{BK07}). 
By Step 1b, the inequality \eqref{eq:1st-ineq-red} holds for $h_j$. By letting  $j\to \infty$  and  invoking the weak convergence theorem  \cite[Theorem~3.2]{BT87} or \cite[Theorem~4.26]{GZ-book}, we conclude that \eqref{eq:1st-ineq-red} holds for a  general $h$. 
\end{proof}

\begin{lem}\label{lem:star-norm-SZ} Let $E\subset X$ be a compact subset. Let $v \in PSH(X,\om)$ be such that $v \leq 0$ on $E$ and $M:=\sup_X v \geq 1$. Define $\phi = (v-M-1)/M$. Then, $\phi\in \Kc(E)$ satisfies
$$
	\|\phi\|_*^2 \leq \frac{A_0}{M},
$$
where $A_0$ is a uniform constant  depending only on $X,\om$.
Consequently, $\tc (E) =0$ for  a compact pluripolar set $E$. 
\end{lem}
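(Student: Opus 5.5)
The plan is to reduce to a bounded $v$, check membership $\phi\in\Kc(E)$ directly from upper semicontinuity, and then bound the two pieces of $\|\phi\|_*^2$ separately: the $L^2$ part by a compactness estimate for $\om$-psh functions, and the current part by an explicit closed positive current built from $v$.

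\textbf{Reduction to bounded $v$.} I first replace $v$ by $\max\{v,-1\}$. Since $\om\ge 0$, constants are $\om$-psh, so this is still in $PSH(X,\om)$. It is still $\le 0$ on $E$ and still has $\sup_X v=M$. In the unbounded case this replacement is what I would actually use to get a member of $\Kc(E)$, since an unbounded $v$ may have infinite energy. So I assume $-1\le v\le M$.

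Set $s:=M+1-v$, so that $1\le s\le M+2$ and $\phi=-s/M$.

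\textbf{Membership in $\Kc(E)$.} Since $v$ is upper semicontinuous and $v\le 0$ on $E$, the set $G=\{v<1\}$ is an open neighbourhood of $E$. On $G$ we have $\phi\le (1-M-1)/M=-1$. Everywhere on $X$ we have $\phi\le -1/M\le 0$. Bounded quasi-psh functions lie in $W^*(X)$, hence $\phi\in\Kc(E)$.

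\textbf{The $L^2$ part.} By the compactness of $\{w\in PSH(X,\om):\sup_X w=0\}$ in $L^1(X)$ (Hartogs), there is a uniform constant $C_1$ with $\int_X (M-v)\,\al^n\le C_1$. Since $0\le s\le M+2\le 3M$, this gives
$$\|\phi\|_{L^2}^2=\frac{1}{M^2}\int_X s^2\al^n\le \frac{3}{M}\int_X s\,\al^n\le \frac{3(C_1+\int_X\al^n)}{M}.$$

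\textbf{The current part.} I need a closed positive current dominating $d\phi\wed d^c\phi=M^{-2}\,ds\wed d^c s$ whose mass is $O(M)$ before dividing by $M^2$. For smooth $v$ one computes
$$\tfrac12 dd^c(s^2)=ds\wed d^c s+s\,dd^c s=ds\wed d^c s+s(\om-\om_v).$$
Using $s\ge0$ and $\om_v\ge 0$, this gives
$$ds\wed d^c s\le s\,\om-\tfrac12 dd^c(s^2)\le (M+2)\,\om-\tfrac12 dd^c(s^2)=:S.$$
The current $S$ is closed, and it is positive because $S\ge ds\wed d^c s\ge0$. Its mass is $(M+2)\int_X\om\wed\al^{n-1}$, because the exact term $dd^c(s^2)$ integrates to zero against the closed form $\al^{n-1}$.

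Taking $T=S/M^2$ gives $\|T\|\le 3\int_X\om\wed\al^{n-1}/M$. Combined with the $L^2$ part this yields $\|\phi\|_*^2\le A_0/M$, with $A_0$ depending only on $X,\om,\al$.

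\textbf{Main obstacle.} The delicate step is justifying $ds\wed d^cs\le S$ for a merely bounded $v$. The identity above requires $v$ smooth. I would approximate $v$ by smooth $\om$-psh functions $v_j\searrow v$ with $-1\le v_j\le M+1$, using Demailly or B\l ocki--Ko\l odziej (the slight change of range only changes constants). The right-hand sides $S_j$ then converge weakly. For the left-hand side, $dv_j\wed d^cv_j\to dv\wed d^cv$ weakly by Bedford--Taylor continuity under monotone sequences (e.g. via $dd^c v_j^2$). The inequality therefore passes to the limit. Alternatively one can pass to the limit in the form $\int (\cdot)\wedge\gamma$ for smooth positive test forms $\gamma$ and invoke Proposition~\ref{prop:weak-conv-cp}-type lower semicontinuity together with Lemma~\ref{lem:w-compactness}.

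\textbf{Consequence.} If $E$ is compact and pluripolar, then by \eqref{eq:SZ-polar} there are $v\in PSH(X,\om)$ with $v\le0$ on $E$ and $\sup_X v=M$ for arbitrarily large $M$. Each such $v$ gives $\tc(E)\le A_0/M$, and letting $M\to\infty$ yields $\tc(E)=0$.
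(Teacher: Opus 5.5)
\textbf{Verdict: the structure matches the paper, but the current step has a sign error that makes it fail as written.}

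Your overall plan is the paper's. You truncate $v$, get the neighbourhood $\{v<1\}$ of $E$ from upper semicontinuity, and bound the $L^2$ part by $L^1$-compactness of normalized $\om$-psh functions. You then dominate $d\phi\wedge d^c\phi$ by $\tfrac12 dd^c$ of a square plus a multiple of $\om$. The truncation $\max\{v,-1\}$ is a sensible addition: the paper's claim $-1\le u=(v-M)/M$ tacitly uses $v\ge 0$, which holds for $V_E^*$.

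\textbf{Where it breaks.} Your identity $\tfrac12 dd^c(s^2)=ds\wedge d^cs+s(\om-\om_v)$ is correct, and it gives
$$ds\wedge d^cs=\tfrac12 dd^c(s^2)-s\,\om+s\,\om_v .$$
Discarding $s\,\om_v\ge 0$ therefore yields a \emph{lower} bound on $ds\wedge d^cs$, not the upper bound $ds\wedge d^cs\le s\,\om-\tfrac12 dd^c(s^2)$ that you claim. Substituting the identity, your claimed inequality is equivalent to $2\,dv\wedge d^cv\le s\,\om_v$. This is false in general; for instance it fails at any point where $\om_v=0$ but $dv\neq 0$. So $S=(M+2)\om-\tfrac12 dd^c(s^2)$ does not dominate $d s\wedge d^c s$, and your positivity argument for $S$ collapses with it. The underlying reason is that $s=M+1-v$ is quasi-\emph{super}harmonic, so the inequality $s\,dd^c s\le s\,\om$ points the wrong way.

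\textbf{The fix.} Multiply $dd^c$ of a \emph{quasi-psh} function by a nonnegative bounded weight. This is exactly what the paper does with $1+u\in[0,1]$, $1+u\in PSH(X,\om/M)$, whose current is $T=\om/M+\tfrac12 dd^c u^2+dd^c u=\om/M+\tfrac12 dd^c(1+u)^2$. In your setting, take $w:=v+1\in[0,M+1]$ (after truncation). Then
$$dw\wedge d^cw=\tfrac12 dd^c(w^2)-w\,\om_v+w\,\om\le \tfrac12 dd^c(w^2)+(M+1)\,\om .$$
Since $d\phi\wedge d^c\phi=M^{-2}\,dw\wedge d^cw$, the current $M^{-2}\big((M+1)\om+\tfrac12 dd^c(w^2)\big)$ has the following properties:
\begin{itemize}
\item it is closed;
\item it is positive, because its excess over $M^{-2}dw\wedge d^cw$ is $M^{-2}\big((M+1-w)\om+w\,\om_v\big)\ge 0$;
\item it dominates $d\phi\wedge d^c\phi$;
\item its mass is at most $2M^{-1}\int_X\om\wedge\al^{n-1}$.
\end{itemize}
Equivalently, you can keep $s$ but bound $s\,\om_v\le (M+2)\,\om_v$ instead of dropping it. This gives the dominating closed positive current $\tfrac12 dd^c(s^2)+(M+2)\om_v$, of mass $(M+2)\int_X\om\wedge\al^{n-1}$.

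With this correction the rest of your proof goes through and coincides with the paper's argument, including the consequence $\tc(E)=0$ for compact pluripolar $E$.
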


\begin{proof}  We have
$$
 	-1 \leq u:= \frac{v- M}{M}  \leq 0.
$$
Hence $\phi = u - 1/M \in \Kc(E)$.  Next we compute $W^*$-norm of $\phi$.  The normalization $\sup_X (v-M)  =0$ and $v\in PSH(X,\om)$ imply  a well-known fact that 
$$
	\int_X |u| \al^n = \int_X \frac{M-v}{M} \al^n\leq \frac{A_1}{M}.
$$
for a uniform constant $A_1>0$. 
Since $|\phi| \leq 2$, we get
\[\label{eq:L2-norm-v}
\|\phi\|_{L^2(X)}^2 =	\int_X (u -1/M)^2 \al^n \leq \int_X -2(u-1/M) \al^n \leq  \frac{A_2}{M},
\]
where $A_2 =  2A_1 +2 \int_X\al^n.$
 Moreover, as $1+u \in PSH(X,\om/M)$ and $0\leq 1+ u \leq 1$, we have
$$
	T := \frac{\om}{M} + \frac{1}{2} dd^c u^2 + dd^c u \geq du \wed d^c u = d \phi \wed d^c \phi. 
$$
It follows from \eqref{eq:L2-norm-v} and the Stokes theorem that 
\[\notag
	\|\phi\|_*^2 \leq \|\phi\|_{L^2(X)}^2 + \|T\| \leq  \frac{A_2}{M} + \frac{1}{M}  \int_X \om \wed \al^{n-1} =:\frac{A_0}{M}.
\]
This completes the proof of the first statement.

Now assume $E$ is a compact pluripolar set. By \eqref{eq:SZ-polar}, $\sup_X V_E^*= +\infty$, where $V_E^*$ is its associated Siciak-Zaharjuta extremal function defined in \eqref{eq:SZ}. Choquet's lemma yields there exists a sequence $\{v_j\}_{j\geq 1} \subset PSH(X,\om)$ satisfying $v_j \leq 0$ on $E$ and 
$$
	M_j:=\sup_X v_j \to +\infty 
$$ 
as $j\to \infty$. Applying the first part for $\phi_j = (v_j - M_j - 1)/M_j,$ we get  $\|\phi_j\|_*^2 \leq A_0/M_j$. Since $\phi_j \in \Kc(E)$,
it follows that $\tc (E) \leq A_0/M_j$. Letting $j\to +\infty,$ the second statement follows.
\end{proof}

The following result is the key estimate to prove Theorem~\ref{thm:AT-c-intro}. It provides a precise quantitative comparison between  $\tc(\cdot)$ and $cap_\om(\cdot)$. 

\begin{thm}\label{thm:Vigny} There exists a constant $A>0$ such that for every compact   subset $E\subset X$, 
$$
	\frac{1}{4n} cap_\om(E) \leq \tc(E) \leq A \left[cap_\om(E) \right]^\frac{1}{n}.
$$
\end{thm}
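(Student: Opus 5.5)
The lower bound comes from a multi-step integration by parts against the extremal function $h_E^*$, which produces a self-absorbing term. The upper bound comes from the global Alexander--Taylor inequality \eqref{eq:global-AT} combined with Lemma~\ref{lem:star-norm-SZ}. Throughout, $E$ is compact.

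\emph{Lower bound.} Let $h=h_E^*$ be as in \eqref{eq:ext-h-formula}. Then $-1\le h\le 0$, $h\in PSH(X,\om)$, and by \eqref{eq:cap-id}
$$cap_\om(E)=\int_E \om_h^n=\int_X -h\,\om_h^n .$$
Take any $v\in\Kc'(E)$ with a current $T$ satisfying $dv\wed d^c v\le T$, and assume $v$ is quasi-continuous (Theorem~\ref{thm:quasi-mod-bdd}). Since $v=-1$ a.e. near $E$, Proposition~\ref{prop:ae-qe} gives $v^2=1$ q.e. on $E$. Because $\om_h^n$ puts no mass on pluripolar sets, this yields $cap_\om(E)\le\int_X v^2\om_h^n$.

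I will use the mixed version of Lemma~\ref{lem:IBP-ineq}, obtained by the same proof (Step~1a, smooth approximation, then Step~2). For $1\le k\le n$ it reads
$$\int_X v^2\om_h^k\wed\om^{n-k}\le \int_X v^2\om_h^{k-1}\wed\om^{n-k+1}+2\int_X T\wed\om^{n-1}+\tfrac12\Big(\int_X -h\,\om_h^k\wed\om^{n-k}-\int_X -h\,\om_h^{k-1}\wed\om^{n-k+1}\Big).$$
Summing over $k=1,\dots,n$, the last brackets telescope. Dropping the nonnegative term $\int_X -h\,\om^n$, I get
$$cap_\om(E)\le \int_X v^2\om^n+2n\|T\|+\tfrac12\, cap_\om(E).$$
By \eqref{eq:al-om}, $\int_X v^2\om^n\le\|v\|_{L^2}^2$, and $\int_X T\wed\om^{n-1}\le\|T\|$. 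Hence
$$cap_\om(E)\le 2\|v\|_{L^2}^2+4n\|T\|\le 4n\|v\|_*^2 .$$
Taking the infimum over $v$ and $T$ and using \eqref{eq:w-cap-use} gives $cap_\om(E)\le 4n\,\tc(E)$.

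\emph{Upper bound.} Let $M=\sup_X V_E^*$.
\begin{itemize}
\item If $M=+\infty$, then $E$ is pluripolar by \eqref{eq:SZ-polar}, and Lemma~\ref{lem:star-norm-SZ} gives $\tc(E)=0$.
\item If $M<2$, then \eqref{eq:global-AT} gives $e^{-M}\le\exp(-[cap_\om(E)]^{-1/n})$, so $cap_\om(E)\ge 2^{-n}$. Since $\tc(E)\le 1$ by Proposition~\ref{prop:w-cap-basic-p}(c), the bound holds with $A\ge 2$.
\item If $2\le M<\infty$, Choquet's lemma gives $v_j\in PSH(X,\om)$ with $v_j\le 0$ on $E$ and $M_j:=\sup_X v_j\to M$, so $M_j\ge 1$ for large $j$. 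Lemma~\ref{lem:star-norm-SZ} then yields $\tc(E)\le A_0/M_j$, hence $\tc(E)\le A_0/M$. The right inequality of \eqref{eq:global-AT} gives $M\ge [cap_\om(E)]^{-1/n}$, so $\tc(E)\le A_0[cap_\om(E)]^{1/n}$.
\end{itemize}
Therefore the upper bound holds with $A=\max(A_0,2)$.

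The main obstacle is justifying the mixed-degree integration by parts for non-smooth $h$ and $v\in W^*(X)$. This requires quasi-continuity of $v^2$ and convergence of mixed Monge--Amp\`ere measures $\om_{h_j}^k\wed\om^{n-k}$ along smooth decreasing approximants $h_j\searrow h$, exactly as in Step~2 of Lemma~\ref{lem:IBP-ineq}. I also need care that the boundary terms $\int_X -h_j\,\om_{h_j}^k\wed\om^{n-k}$ converge, which again follows from the Bedford--Taylor monotone convergence theorem.
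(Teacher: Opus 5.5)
Your proof is correct and is essentially the paper's. The telescoped mixed-degree inequality is exactly the paper's $n$-fold iteration of Lemma~\ref{lem:IBP-ineq} with the cancelling $h$-terms, and the upper bound again combines Lemma~\ref{lem:star-norm-SZ} with \eqref{eq:global-AT}. Your only variation is to apply that lemma to competitors $v_j\le 0$ on $E$ with $\sup_X v_j\to M$, so that $\phi_j\in\Kc(E)$ and Lemma~\ref{lem:cap-refine} is never needed, rather than to $V_E^*$, which is only $\le 0$ q.e.\ on $E$.

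One caution concerns your case $M<2$. As printed, \eqref{eq:global-AT} fails whenever $M<1$: for $E=X$ one has $M=0$ and $T_\om(X)=1$, while the right side is $e^{-1}$. So for $M\le 1$, argue as the paper does. There $V_E^*-1$ is admissible in \eqref{eq:cap-om}, and $\om_{V_E^*}^n$ is carried by $E$ with mass $1$, whence $cap_\om(E)=1$. With that, your bound $cap_\om(E)\ge 2^{-n}$ for $M<2$ is justified.
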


\begin{proof}  Let $E\subset X$ be compact. 
Let $h_E^*$ be the relative extremal function of $E$ defined in \eqref{eq:ext-h-formula}.
Then, $0\leq h_E^* \leq 1$ and $h_E^* \in PSH(X,\om)$. 
Next, let $V_E^*$ be the global extremal function for $E$ as in \eqref{eq:SZ}. If $E$ is pluripolar, then $cap_\al(E)=0$ by \cite[Corollary~9.9, Theorem~9.17]{GZ-book}, so is $cap_\om(E)=0$. Similarly, $\tc(E)=0$ by Lemma~\ref{lem:star-norm-SZ}. Hence, both inequalities hold true.  Therefore, from now on we  assume $E$ is non-pluripolar. 
By \eqref{eq:SZ-polar} we have $0\leq V_E^* \in PSH(X,\om)$ and 
$0\leq M:= \sup_X V_E^*<+\infty.$

Let us prove  the second inequality in the theorem.  If $0 \leq M \leq 1$, then $h_E^* = V_E^* -1$. It is easy to see from \eqref{eq:cap-id} that 
$$
	\int_X \om^n = \int_K (\om+ dd^c V_E^*)^n \leq cap_\om(E)  \leq \int_X \om^n.
$$
Hence,
$cap_\om(E) = cap_\om(X)=1$ by normalization \eqref{eq:norm-om}. Hence,  the conclusion  follows immediately from Proposition~\ref{prop:w-cap-basic-p}-(c).

It remains to consider  $M \geq 1$.
Applying Lemma~\ref{lem:star-norm-SZ} for $v= V_E^*$,  
and $\phi = (V_E^* - M-1)/M,$ we obtain 
\[\label{eq:w-norm-AT-cap}
	\|\phi\|_*^2 \leq \frac{A_0}{M}.
\]
Since $\{V_E < V_E^*\}$ is pluripolar,  $V_E^* \leq 0$ q.e. on $E$ and therefore  $\phi\in \wt\Kc(E)$.

The  global Alexander-Taylor inequality \eqref{eq:global-AT} is equivalent to  
\[\label{eq:global-AT-use}
\frac{cap_\om(E)}{A_1} 	\leq \frac{1}{M} \leq  [cap_\om(E)]^\frac{1}{n}
\]
for a uniform constant $A_1>0$. Thus, Lemma~\ref{lem:cap-refine} yields
$$
	\tc(E) \leq \|\phi\|_*^2 \leq  A_0 [cap_\om(E)]^\frac{1}{n}.
$$

We proceed to prove the first inequality of the theorem. 
A weaker version  is already proved in Lemma~\ref{lem:cap-c}. Here we use a global argument to improve significantly the constant and remove K\"ahler condition on $\om$. Recall that $E\subset X$ is compact. Let  $v\in \wt\Kc(E)$ be such that $-1 \leq v \leq 0$. Hence, $\wt v = -1$ q.e. on $E$.  For simplicity, we write  $h:= h_E^*$ the relative extremal function associated to $E$ and write $v$  for $\wt v$ in what follows. 
Since $v = -1$ q.e. on $E$, it follows from \eqref{eq:cap-id} that 
\[\label{eq:rough-ineq} 
	cap_\om(E) =\int_E   \om_h^n \leq \int_X v^2 \om_h^n.
\]
Now we will estimate the integral
$
	\int_X v^2 (\om + dd^c h)^n
$
from above. In fact, let $T$ be a positive closed (1,1)-current realizing the $W^*$-norm for $v$. In particular, 
$$
	dv \wed d^c v \leq T.
$$
By  Lemma~\ref{lem:IBP-ineq},  we obtain
\[\label{eq:1st-ineq-red-gen}\begin{aligned}
	\int_X v^2 \om_h^n 
&\leq 	\int_X v^2 \om_h^{n-1}\wed \om  	+ 2 \int_X T \wed\om^{n-1} \\ 
&\quad 	+ \frac{1}{2} \int_X (-h) \om_h^n + \frac{1}{2} \int_X h\om_h^{n-1}\wed \om.
\end{aligned}\]
Note  $cap_\om(E) = \int_X -h \om_h^n$ by \eqref{eq:cap-id}. Hence, combining  \eqref{eq:rough-ineq} and \eqref{eq:1st-ineq-red-gen}, we get 
\[\label{eq:first-step}
	\frac{1}{2} cap_\om(E) \leq \int_X v^2 \om_h^{n-1} \wed \om + 2 \|T\|_\om + \frac{1}{2} \int_X h\om_h^{n-1} \wed \om,
\]
where we used the notation $\|T\|_\om = \int_X T\wed \om^{n-1}$.
Next, 
$$
	\int_X v^2 \om_h^{n-1} \wed \om =\int_X v^2\om_h^{n-2} \wed \om^2 +  \int_X v^2 dd^c h \wed \om_h^{n-2}\wed \om.
$$
Observe that we  can apply the argument in \eqref{eq:1st-ineq-IBP}  for  $dd^c h\wed \om_h^{n-2}\wed\om$  in the place of $dd^c h \wed \om_h^{n-1}$. This  yields
\[\label{eq:inductive-step}
\begin{aligned}
	\int_X v^2 \om_h^{n-1} \wed \om 
&\leq 	\int_X v^2\om_h^{n-2} \wed \om^2  + 2 \|T\|_\om \\
&\quad + \frac{1}{2} \int_X(-h) \om_h^{n-1} \wed \om + \frac{1}{2}\int_X h \om_h^{n-2}\wed\om^2.
\end{aligned}\]
The important point here is  that  when plugging this estimate into \eqref{eq:first-step} the third integral on the right hand side is cancelled out. Hence, 
 we get
$$
	\frac{1}{2} cap (E) \leq   \int_X v^2\om_h^{n-2}\wed\om^2 + 4 \|T\|_\om + \frac{1}{2} \int_X h\om_h^{n-2}\wed \om^2.
$$
Compared to \eqref{eq:first-step}, we lowered the exponent of $\om_h$ by one after using the integration by parts inequality  in Lemma~\ref{lem:IBP-ineq}.
Repeating the process for $(n-2)$-times more, we finally get
$$\begin{aligned}
	\frac{1}{2} cap_\om(E) 
&\leq  \int_X v^2\om^n  + 2n \|T\|_\om + \frac{1}{2}\int_X h \om^n \\
& \leq  \|v\|_{L^2(X)}^2 + 2n \|T\|\\
&\leq 2n \|v\|_*^2,
\end{aligned}$$
as $h$ is negative, $T$ realizes the $W^*$-norm of $v$ and $\om \leq \al$. Since $v\in \wt\Kc(E)$, $-1\leq v\leq 0$ is arbitrary, we conclude  $cap_\om(E) \leq 4n \;\tc (E)$ by Lemma~\ref{lem:cap-refine}. 
\end{proof}

\begin{remark} \label{rmk:DKN} The second inequality in the theorem was obtained in \cite[Proposition~5.1]{DKN} when $\om=\al$ is a K\"ahler form and $\tc(E)$ was defined by another equivalent norm with $W^*$-norm. More precisely, for $f \in W^*(X)$, one defines
$$
	\| f\|_{*'} = \|f\|_{L^1(X)} + \min\{\|T\|^\frac{1}{2} : T \in \Ga_f\}.	
$$
Clearly, $\|f\|_{*'}^2 \leq 2 \| f\|_*^2$. The other direction follows from the Poincar\'e inequality. However, the sharpness of exponents were not known in \cite{DKN}.  A qualitative version of  the second inequality was also obtained in \cite[Proposition 29]{Vigny} without a specific exponent. 
\end{remark}

We are ready to get the comparison between capacities in the main theorem.

\begin{proof}[Proof of Theorem~\ref{thm:AT-c-intro}] Let $K \subset X$ be a compact subset. It follows from \eqref{eq:global-AT} and Theorem~\ref{thm:Vigny} that we may assume $K$ is non-pluripolar. Let $V_K^*$ be the Siciak-Zaharjuta extremal function associated to $K$ and $M = \sup_X V_K^*$.  By definitions of two capacities, the first inequality is equivalent to 
$$
	\tc(K) \leq A/M
$$
for a uniform constant $A>0$. If $ 0 \leq M <1$, then it is obvious as $\tc(K) \leq 1$ (Proposition~\ref{prop:w-cap-basic-p}-(c)). Otherwise,  it is an immediate consequence of \eqref{eq:w-norm-AT-cap}.
Next, we have from \eqref{eq:global-AT-use} and Theorem~\ref{thm:Vigny} that 
$$
	1/M \leq [cap_\om(E)]^\frac{1}{n} \leq [4n \tc(E)]^\frac{1}{n}.
$$ 
This is equivalent to the second inequality. 
\end{proof}

The optimality of exponents are followed from the following concrete examples.

\begin{remark}\label{rmk:sharp-ineq} The inequalities in Theorem~\ref{thm:AT-c-intro}  as well as in  Theorem~\ref{thm:Vigny} are sharp as far as the exponents are concerned. Indeed,  let us consider $X = \bP^n$ and $\om$ be the Fubini-Study metric. The first inequality in Theorem~\ref{thm:Vigny} and Theorem~\ref{thm:AT-c-intro} implies
\[ \label{eq:AT-sharp}
	\exp\left( - A'/ cap_\om (K)\right) \leq T_\om(K),
\]
where $A' = A/4n$.
 By \cite[Example~9.22]{GZ-book} that the global capacity $T_\om (\cdot)$ and its local version $T_{\bB} (\cdot)$, where $\bB = B(0,1) \subset \bC^n$ is the unit ball, are comparable modulo a uniform constant. Namely,
 $$
 	\frac{1}{\sqrt{2}} T_\om(K) \leq T_{\bB}(K) \leq 2 T_\om(K).
 $$
  Moreover,  \eqref{eq:cap-loc-vs-glob} shows that $cap_\om(E)$ and $cap (E, \Om)$ are equivalent to each other up to a uniform constant for every Borel set $E \subset D \subset \subset \Om$ where $\Om \subset \subset X$ is a strictly pseudoconvex local coordinate chart. Let  $K = \{ (z_1,...,z_n)\in \bC^n: |z_1| \leq \de, |z_j|\leq 1/2, \; j=2,...,n\}$ $(\de<1/2)$ be  a polydisc, which is contained in a local coordinate unit ball.  Alexander and Taylor \cite[Remark~2]{AT84} showed that the exponents in \eqref{eq:AT-sharp} are sharp for such compact sets. So are the ones in the first inequality in Theorem~\ref{thm:Vigny}.
 
Next, the right hand side inequality in \eqref{eq:global-AT} reads
$$	
	T_\om(K) \leq \exp\left( \frac{-1}{ [cap_\om(K)]^\frac{1}{n}} \right)
$$
where the exponents are sharp by the same local comparable reason and \cite[Remark~2]{AT84} as above. In this case the example is given by a small closed ball $K = \{z: |z|\leq \veps\}$. Combining this with Theorem~\ref{thm:AT-c-intro}  and \eqref{eq:AT-sharp} we obtain 
$$
	\exp\left( - A / \tc (K)\right) \leq T_\om(K) \leq  \exp\left( \frac{-1}{ [cap_\om(K)]^\frac{1}{n}} \right)
$$
with sharp exponents. This concludes the sharpness of the exponents in the second inequality in  Theorem~\ref{thm:Vigny}. So is the second one in Theorem~\ref{thm:AT-c-intro}.
\end{remark}

Next, we prove the existence of bounded solution to the Monge-Amp\`ere equation. Recall from \cite[Definition~1.3]{EGZ09} that the measure $\mu$ belongs to the class $\cH(\tau)$ with $\tau>0$ if
\[\notag
	\mu(K) \leq A [cap_\om(K)]^{1+\tau}
\]
for every Borel subset $K\subset X$.

\begin{proof}[Proof of Corollary~\ref{cor:ma-intro}] 
By \cite[Theorem~2.1]{EGZ09}, it enough to show that $\mu \in \cH(\tau_1)$ for some $\tau_1>0$. The proof of \cite[Lemma~3.3]{DKN} gives that for $u, v \in PSH(X,\om)$ and $v\leq u \leq 0$,
$$
	\|u - v\|_*  \leq  \|u-v\|_{L^2(X)} + A \|v\|_{L^\infty} \leq (A+1) \|v\|_{L^\infty},
$$
where $A = \int_X \om \wed \al^{n-1}$. Assume now $\sup_X v=0$ and $u= v_s:= \max\{v, -s +1\}$ for $s\geq 1$. By \cite[Eq. (3.5)]{DKN},
$$
	\| v_s -v \|_{L^1(X)} \leq  c_2 e^{-\tau_2 s}
$$
for uniform constants $c_2, \tau_2>0$. Using the $(W^*(X), \log^p)$-continuity of $\mu,$ we have
$$
	\mu(v\leq -s) \leq \int_X (v_s -v)d\mu  \leq c_3 \|v\|_{L^\infty} (\tau_2 s + \log\|v\|_{L^\infty} +1)
$$
for a uniform positive constant $c_3$. From this, we get  the statement of \cite[Lemma~3.7]{DKN} for $\om$ being a semi-positive and big form. Hence, the remained part follows the same lines  as the one of \cite[Theorem~3.6]{DKN}. 
\end{proof}

\end{document}